\newtheorem{thm}{Theorem}[section]
\newtheorem{lem}[thm]{Lemma}
\newtheorem{cor}[thm]{Corollary}
\newtheorem{prop}[thm]{Proposition}
\theoremstyle{definition}
\newtheorem{rem}[thm]{Remark}
\newcommand{\cD}{{\mathcal D}}
\newcommand{\cM}{{\mathcal M}}
\newcommand{\cJ}{{\mathcal J}}
\newcommand{\cO}{{\mathcal O}}
\newcommand{\cZ}{{\mathcal Z}}
\newcommand{\cC}{{\mathcal C}}
\newcommand{\bC}{{\mathbb C}}
\newcommand{\bZ}{{\mathbb Z}}
\newcommand{\tC}{{\widetilde C}}
\newcommand{\lra}{\longrightarrow}
\newcommand{\m}[1]{\mathcal{#1}}
\newcommand{\Q}{{\mathbb Q}}
\newcommand{\pa}[1]{{\left(#1\right)}}
\title{Isogenies of Jacobians}
 \author{V. Marcucci}
 \address{Valeria Marcucci \\  Universit\`a degli Studi di Pavia  \\ Dipartimento di Matematica \\ Via Ferrata, 1  \\ 27100 Pavia, Italy  }
 \email{valeria.marcucci@unipv.it}
 \author{J.C. Naranjo}
 \address{Juan Carlos Naranjo  \\Universitat de Barcelona  \\ Facultat de Matem\`atiques \\ Gran Via 585  \\ 08007 Barcelona, Spain  }
 \email{jcnaranjo@ub.edu}
 \author{G.P. Pirola}
 \address{Gian Pietro Pirola  \\ Universit\`a degli Studi di Pavia  \\ Dipartimento di Matematica \\ Via Ferrata, 1  \\ 27100 Pavia, Italy  }
 \email{gianpietro.pirola@unipv.it}
 \thanks{Naranjo has been partially supported by the Proyecto de Investigaci\'on MTM2012-38122-C03-02; Pirola
has been partially supported by Gnsaga and by MIUR PRIN 2012: \textup{Moduli, strutture geometriche e loro applicazioni}. Part of this research has been done during the visit of Pirola to the IMUB in the spring of 2013.}
\begin{document}

\begin{abstract}
We prove by means of the study of the infinitesimal variation of Hodge structure and a generalization of the classical Babbage-Enriques-Petri theorem that the Jacobian variety of a generic element of a codimension $k$
subvariety of $\mathcal M_g$  is not isogenous to a distinct Jacobian if $g>3k+4$. We extend this result to $k=1, g\ge 5$ by using degeneration methods. 
\end{abstract}

\maketitle

\pagestyle{plain}

\section{Introduction}

Let $\cZ$ be a subvariety of the moduli space $\cM_g$  of complex smooth curves of genus $g$ of codimension $k>0$. We want to show that under some numerical restrictions, 
the Jacobian of a generic element of $\cZ$ is not isogenous to a distinct Jacobian. In other words, all the curves of genus $g$ contained in $JC$, with $C$
generic in $\cZ$, are birationally equivalent. 
This is an extension of the Theorem proved by Bardelli and Pirola (see \cite{BarPir}) for the whole $\mathcal M_g$ and can be seen as a Noether-Lefschetz locus problem for 
surfaces which are the product of two curves of the same genus (see Corollary (\ref{remark_NL})). More precisely, our result is as follows:
\vskip 3mm
\begin{thm}\label{main}
 Let $\cZ \subset \mathcal M_g$ a codimension $k>0$ subvariety. 
Assume  that $g>3k+4$ (in particular $g>7$), then the Jacobian of a generic curve $C$ of $\cZ$ is not isogenous to another Jacobian.
 The same is true for $k=1$ and $g\ge 5$.
\end{thm}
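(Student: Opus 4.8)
The plan is to argue by contradiction: suppose that for the general curve $C$ of $\cZ$ the Jacobian $JC$ is isogenous to a Jacobian $JD$ with $D$ not birational to $C$. Being isogenous to a fixed Jacobian is, by the algebraicity of Hodge loci (or, more elementarily, because isogenies of bounded degree vary in algebraic families), a countable union of algebraic conditions; so after replacing $\cZ$ by an irreducible parameter variety $B$ with a generically finite dominant morphism $B\to\cZ$ I may assume I have two families of smooth curves $\cC\to B$, $\cD\to B$, with $C_t$ not birational to $D_t$, and a family of isogenies $\lambda_t\colon JC_t\to JD_t$. Since $g>3k+4$ (so in particular $g\ge8$), the hyperelliptic and trigonal loci have codimension greater than $k$ in $\cM_g$; shrinking $B$, the general $C_t$ --- and, as will come out of the argument, the general $D_t$ --- is then non-hyperelliptic and non-trigonal, so that the Babbage--Enriques--Petri theorem applies to both.

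Next I would extract linear data from the isogenies. Each $\lambda_t$ induces an isomorphism of rational Hodge structures $H^1(C_t,\Q)\cong H^1(D_t,\Q)$ varying holomorphically with $t$, hence an isomorphism $\alpha_t\colon H^0(C_t,\omega_{C_t})\to H^0(D_t,\omega_{D_t})$ on the $(1,0)$-parts that intertwines the infinitesimal variations of Hodge structure --- i.e.\ the cup products $H^1(C_t,T_{C_t})\otimes H^0(\omega_{C_t})\to H^1(\cO_{C_t})$ and its analogue for $D_t$ --- of the two families. By the infinitesimal Torelli theorem this cup product is injective in the first variable, so the image $T_{[C_t]}\cZ$ of $T_tB$ in $H^1(C_t,T_{C_t})$, which has dimension $3g-3-k$, is carried by (the conjugation action of) $\alpha_t$ onto a subspace of $H^1(D_t,T_{D_t})$ of the same dimension; in particular the $D_t$ sweep out a codimension-$k$ subvariety $\mathcal W\subset\cM_g$. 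Dualizing, let $Q_C\subset\operatorname{Sym}^2 H^0(\omega_{C_t})$, resp.\ $Q_D\subset\operatorname{Sym}^2 H^0(\omega_{D_t})$, be the annihilator of the image of $T_{[C_t]}\cZ$, resp.\ $T_{[D_t]}\mathcal W$, under the cup product. Then $Q_C$ contains the space $I_2(C_t)$ of quadrics through the canonical curve, with $\dim Q_C/I_2(C_t)=k$, and likewise for $D_t$, and $\operatorname{Sym}^2\alpha_t$ carries $Q_C$ onto $Q_D$. One technical point is that $\lambda_t$ need not respect the principal polarizations; I would absorb the resulting ambiguity by using that the infinitesimal variation of a family of curves is symmetric with respect to the polarization, which determines the identification up to a harmless scalar.

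From $\operatorname{Sym}^2\alpha_t(Q_C)=Q_D$ and $\dim Q_C/I_2(C_t)=\dim Q_D/I_2(D_t)=k$ it follows that $\operatorname{Sym}^2\alpha_t(I_2(C_t))$ and $I_2(D_t)$ are two subspaces of codimension $k$ in $Q_D$, so that $\dim\!\bigl(\operatorname{Sym}^2\alpha_t(I_2(C_t))\cap I_2(D_t)\bigr)\ge\binom{g-2}{2}-k$: the isomorphism $\alpha_t$ matches all but at most $k$ dimensions of the two Petri systems of quadrics. The crucial ingredient is then a generalized Babbage--Enriques--Petri statement: if $C,D$ are non-hyperelliptic, non-trigonal curves of genus $g$, $\alpha\colon H^0(\omega_C)\to H^0(\omega_D)$ is a linear isomorphism with $\dim\!\bigl(\operatorname{Sym}^2\alpha(I_2(C))\cap I_2(D)\bigr)\ge\binom{g-2}{2}-k$, and $g>3k+4$, then $\alpha$ carries the canonical curve of $C$ in $\mathbb P^{g-1}$ isomorphically onto that of $D$, whence $C\cong D$; this contradicts $C_t$ not being birational to $D_t$ and proves the theorem for $g>3k+4$. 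I expect this generalized Petri statement to be the main obstacle: one must show that discarding at most $k$ quadrics from the Petri system cannot enlarge their common zero scheme away from the canonical curve, which requires a careful analysis of Petri's generators and syzygies --- together with the trigonal and plane-quintic strata, where the quadrics cut out a scroll or a Veronese surface --- and it is precisely this analysis that forces the inequality $g>3k+4$.

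For $k=1$ the argument above only gives $g>7$, so to reach $g\ge5$ I would argue by degeneration. Starting from a divisor $\cZ\subset\cM_g$ over whose general point $JC$ is isogenous to a distinct Jacobian, I would pass to the closure $\overline\cZ\subset\overline\cM_g$ and specialize $C$ to a general point of $\overline\cZ$ on the boundary: to an irreducible one-nodal curve, whose generalized Jacobian is a rank-one extension of a genus-$(g-1)$ Jacobian, or to a curve of compact type, whose Jacobian is a product of lower-genus Jacobians. The family $\cD\to B$ and the isogenies degenerate compatibly; comparing abelian and toric parts and using the monodromy of the degeneration should reduce the statement to the case of genus $g-1$ (respectively of genus $\le g-1$), already established, together with the compact-type case, which is treated directly. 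Iterating down to $g=5$, and handling by hand the finitely many remaining low-genus situations --- in particular the plane quintics in genus $6$ --- completes the proof.
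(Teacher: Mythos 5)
Your overall architecture coincides with the paper's --- reduce the case $g>3k+4$ to the statement that a codimension-$k$ subsystem of the quadrics through a canonical curve still cuts out only that curve, and treat $k=1$, $g\ge 5$ by degeneration --- but both steps you flag as the real content are left unproved, and your sketches of how to supply them would not close the argument. On the first: the generalized Babbage--Enriques--Petri statement as you formulate it (non-hyperelliptic, non-trigonal, $g>3k+4$) is missing a hypothesis that matters. The paper's reconstruction theorem requires the Clifford index to satisfy $c>k+1$, and this is where the codimension of $\cZ$ enters a second time: by Coppens--Martens \cite{CopMar} and a Riemann--Hurwitz count, the locus of curves of Clifford index $c$ has codimension at least $g-2c-2$ in $\cM_g$, so a generic curve of a codimension-$k$ subvariety with $g>3k+4$ automatically has $c>k+1$; for $k\ge 2$, excluding only the hyperelliptic and trigonal loci is not enough. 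Moreover the proof is not an analysis of Petri's generators and syzygies. The paper picks $k+1$ independent points of the putative second curve off $C$, produces from them a class $\alpha\in H^1(C,T_C)$ whose cup-product kernel has codimension at most $k+1$ in $H^0(C,\omega_C)$, builds the rank-$2$ extension $E_\alpha$ of $\omega_C$ by $\cO_C$, extracts a sub-line bundle with at least two sections, and then either contradicts $c>k+1$ directly or forces the span of the chosen points into a secant plane of $C$; a correspondence argument together with Ran's theorem on symmetric products \cite{Ran} then identifies the two curves. None of this is visible in your sketch, and it is exactly the step you acknowledge you cannot yet do.

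On the degeneration for $k=1$: there is no induction on $g$ for the divisorial statement. What is used at genus $g-1$ is the Bardelli--Pirola theorem for the whole of $\cM_{g-1}$ \cite{BarPir} (whence the bound $g\ge 5$), applied to the normalization of the limit curve --- not ``the already established case of genus $g-1$'' of the theorem being proved. Before that one must show that $\overline{\cD}$ actually meets the boundary in suitable curves; the paper produces limits of the form $E_1\cup\Gamma\cup E_2$, a genus-$(g-2)$ curve with two elliptic tails, by a computation in the rational Picard group of a complete threefold inside $\Delta_1$, using that the $\lambda$-coefficient of $[\overline{\cD}]$ is nonzero. This existence statement is not automatic and you do not address it. Finally, and most importantly, a single specialization to $\Delta_0$ only controls $\chi_t$ on (part of) the monodromy-invariant homology of the nearby smooth fibre; the paper needs two \emph{independent} degenerations, obtained by letting each elliptic tail become nodal in turn, combined with the compatibility of $\chi_t$ with the intersection form, to pin down $\chi_{t,\mathbb Z}$ as multiplication by $n$ on all of $H_1(C_t,\mathbb Z)$. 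Your sketch does not isolate this point, and without it the limit information does not propagate back to the generic curve of the divisor.
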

Observe that the Theorem fails for $g=4$ and $k=1$: in this case $\m{M}_4$ is a divisor in $\m{A}_4$, therefore intersecting in the Siegel upper space $\m{H}_4$ the 
Jacobian locus $\m{J}_4$ with the image $j(\m{J}_4)$ by the action of a fixed isogeny $j$, we get a divisor in $\m{M}_4$ where the Jacobian of a generic element is 
isogenous to a different Jacobian. 

For $g>3k+4$, our strategy is as follows: after a base change we have two families of smooth complex curves of genus $g$ 
on a base variety $W,$ $\pi: \mathcal C\to W$ and $\pi' :\mathcal C' \to W,$ and a family of isogenies of the associated family of Jacobians, that is
\[
\chi : J(\mathcal C')\to J(\mathcal C).
\]
This means that for $t\in W$ the map $\chi_t: J(C'_t)\to J(C_t)$ is an isogeny, here $C'_t=\pi'^{-1}(t)$ and
$C_t=\pi^{-1}(t).$

It follows that the associated rational Hodge structures are isomorphic. Consider the local (polarized) systems
$\Lambda_\bZ=R^1\pi_\ast \bZ$ and $\Lambda'_{\bZ}=R^1\pi'_\ast \bZ$  and tensoring by $\bC$ $\Lambda_{\bC}=R^1\pi_\ast \bC$ and $\Lambda'_{\bC}=R^1\pi'_\ast \bC.$

In particular the infinitesimal variation of Hodge structure associated to the Hodge filtration of
$ \Lambda^{1,0}\subset \Lambda_{\bC}$ and $\Lambda'^{1,0}\subset \Lambda'_{\bC}$ are isomorphic. We borrowed this basic observation  from Claire Voisin (see Remark (4.2.5) 
in \cite{BarPir}).  It is well known 
 that the infinitesimal invariant of Hodge structure of curves determines the quadrics that contain a canonical curve (see \cite{CGGH}). This allows to translate our problem to a geometric 
 one. Let $I(2)$ and $J(2)$ be the space of quadrics that contain the canonical curve associated to $C_t$ and $D_t.$
It follows that under a choice of a suitable canonical embedding $I(2)\cap J(2)$ has codimension $\geq k$, where $k$ is the codimension of $m(W)$ in 
$\cM_g$ and $m$ is the modular mapping $m:W\to \cM_g.$
We can bound the codimension $k$ by using the Clifford index. For this we prove a result that gives an interesting (at least in our opinion) reconstruction result of the 
curve from a {\em partial system of quadrics}. It is a generalization of the Babbage-Enriques-Petri theorem (see e.g. chapter $3$, section $3$ in \cite{ACGH}). 

\vskip 3mm
\begin{thm} \label{main_geom}
Let $C$ be a curve of genus $g$ and Clifford index $c$. Let $I_2\subset Sym^2 H^0(C,\omega_C)$ be the vector space of the equations of the 
quadrics containing $C$ and let $K\subset I_2$ be a linear subspace of codimension $k$. If $g>2k+5$ and $c>k+1$, then $C$ is the only 
irreducible non-degenerate curve contained in the intersection of the quadrics of $K$.   
\end{thm}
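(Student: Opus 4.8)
The plan is to deduce the statement from a single rank estimate. First I would unwind the numerical hypotheses: since $c>k+1\ge 2$ we have $c\ge 3$, so by the classical Babbage--Enriques--Petri theorem the canonical curve $C\subset\mathbb{P}^{g-1}=\mathbb{P}(H^0(C,\omega_C)^{\ast})$ is cut out scheme-theoretically by the quadrics of $I_2$; moreover $c\ge k+2$ implies that the gonality of $C$ is at least $k+4$, so by geometric Riemann--Roch any $\le k+3$ points of $C$ are in linearly general position in the canonical embedding, and $C$ carries no pencil of degree $\le k+2$.

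Now assume, for contradiction, that there is an irreducible non-degenerate curve $C'\ne C$ contained in $\bigcap_{Q\in K}Q$. Consider the restriction map $\rho\colon I_2\to H^0(C',\mathcal{O}_{C'}(2))$. Because $C'$ lies in $V(K)$, every quadric of $K$ vanishes on $C'$, so $\rho$ annihilates $K$ and hence factors through the $k$-dimensional quotient $I_2/K$; therefore $\rk\rho\le k$. On the other hand $\ker\rho=I_2\cap I_2(C')=I_2(C\cup C')$, so $\rk\rho=\dim I_2-\dim I_2(C\cup C')$ is exactly the generic rank of the linear series cut out on $C'$ by the quadrics through $C$, i.e. the largest number of general points of $C'$ that impose independent conditions on $I_2$ (one sees this by adding general points of $C'$ one at a time and stopping precisely when the remaining quadrics all contain $C'$). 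Thus it suffices to show that $k+1$ general points of $C'$ impose independent conditions on $I_2$, which will contradict $\rk\rho\le k$.

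I would prove the latter by induction on the number $j\le k$ of points. The inductive step reduces to showing that $C'$ is not contained in the base locus $B_j$ of the quadrics through $C\cup\{p_1,\dots,p_j\}$ for $p_1,\dots,p_j\in C'$ general; indeed $C'\subseteq B_j$ would force $I_2(C\cup\{p_1,\dots,p_j\})=I_2(C\cup C')$, and combined with the inductive hypothesis that the $j$ points impose independent conditions this gives $\rk\rho\le j\le k$, which is why $B_j$ must be controlled. Since $B_j$ is an intersection of quadrics all containing the canonically embedded $C$, the only way $B_j$ can contain a non-degenerate curve other than $C$ through the points $p_i$ is by way of secant spaces of $C$; but the general points $p_1,\dots,p_{j+1}$ of $C'$ are $\le k+1\le g$ in number and hence linearly independent, the points of $C$ are in linearly general position, and $C$ has no low-degree pencil, and these facts together preclude a non-degenerate curve lying on such a union of small secant planes. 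The hypothesis $g>2k+5$ enters to guarantee that $\dim I_2=\binom{g-2}{2}$ is large enough for the induction to run through all $j\le k$ steps, so that at each stage the base loci involved are proper and a new quadric can indeed be produced. This contradiction proves the theorem.

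The main obstacle is exactly the base-locus control in the previous paragraph: understanding $B_j$, i.e. showing that deleting a few quadrics from, or imposing a few extra point conditions on, the canonical quadric system of a curve of large Clifford index does not create a spurious non-degenerate curve. This is where the Clifford-index hypothesis is indispensable, and it is the genuine generalization of the classical dichotomy in Babbage--Enriques--Petri, in which the trigonal curves and the plane quintics --- the curves of Clifford index $1$ --- are precisely those for which the quadrics through $C$ fail to cut out $C$. A convenient way to package the estimate is to bound $\dim I_2(C\cup C')$ for every irreducible non-degenerate $C'\ne C$: the trivial inclusion $I_2(C\cup C')\subseteq I_2$ already gives $\dim I_2(C\cup C')\le\binom{g-2}{2}$, and the content of the hypotheses $g>2k+5$ and $c>k+1$ is precisely to improve this to $\dim I_2(C\cup C')\le\binom{g-2}{2}-(k+1)$.
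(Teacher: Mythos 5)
Your reduction in the second paragraph is sound: since $c\ge 3$ the Enriques--Petri theorem gives $V(I_2)=C$, so the restriction $\rho\colon I_2\to H^0(C',\mathcal O_{C'}(2))$ is nonzero for every irreducible $C'\ne C$, general points of $C'$ impose independent conditions on $\rho(I_2)$ until the system is exhausted, and the theorem for arbitrary $K$ of codimension $k$ would indeed follow from the single statement that $k+1$ general points of any irreducible non-degenerate $C'\ne C$ impose independent conditions on $I_2$. The problem is that this last statement \emph{is} the theorem (for the special subspaces $K=I_2(-p_1-\cdots-p_j)$, $j\le k$), and at the point where you must prove it -- controlling the base locus $B_j$ -- you substitute an assertion for an argument. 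The claim that ``the only way $B_j$ can contain a non-degenerate curve other than $C$ through the $p_i$ is by way of secant spaces of $C$'' is unjustified and, as stated, not something one can expect: already for the full system $I_2$ the excess base locus in the low-Clifford-index cases is a scroll or a Veronese surface, not a union of secant planes, and there is no structure theorem describing how the base locus grows when one imposes $j$ extra point conditions on (equivalently, deletes $j$ quadrics from) the canonical ideal. Linear general position of small subsets of $C$ and the absence of low-degree pencils do not by themselves ``preclude'' a spurious curve; some construction has to convert the hypothetical curve into a low-Clifford-index linear series on $C$, and none is given. Your closing paragraph concedes exactly this, and the final sentence (``the content of the hypotheses is precisely to improve this to $\dim I_2(C\cup C')\le\binom{g-2}{2}-(k+1)$'') merely restates the goal.

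For comparison, the paper's proof supplies precisely the missing mechanism. From $k+1$ linearly independent points $x_i\in\bigcap_{Q\in K}Q$ off $C$ it produces, via a duality diagram, a nonzero class $\alpha\in H^1(C,T_C)$ lying in the span of the $x_i\otimes x_i$; the associated extension $0\to\mathcal O_C\to E_\alpha\to\omega_C\to 0$ admits a $(g-k')$-dimensional space $W$ of lifted canonical sections because $\ker(\cdot\cup\alpha)=\bigcap H_i$. The hypothesis $g>2k+5$ is used not for a dimension count on $I_2$ but to force the kernel of $\Lambda^2W\to H^0(C,\omega_C)$ to meet the Grassmannian of decomposables, yielding a line subbundle $L_\alpha\subset E_\alpha$ with $h^0\ge 2$. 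Then either $h^0(\omega_C\otimes L_\alpha^{-1})\ge 2$, and $L_\alpha$ violates $c>k+1$ (this is where the Clifford index actually enters), or one gets points $p_1,\dots,p_e\in C$ with $e\le k'$ and $\langle x_1,\dots,x_{k'}\rangle\subset\langle p_1,\dots,p_e\rangle$, after which the uniform position theorem and Ran's theorem on symmetric products identify $C'$ with $C$ and give the contradiction. You would need to either reproduce this construction or find a genuine substitute for it; as written, the proposal has a gap exactly at the theorem's core.
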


\begin{cor} \label{main_cor}
 Let $C$ be a generic curve in a codimension $k$ subvariety $\cZ$ of $\mathcal M_g$. Let $I_2\subset Sym^2 H^0(C,\omega_C)$ be the vector space of the equations of the 
 quadrics containing $C$. Let $K\subset I_2$ be a linear subspace of codimension $k$. If $g>3k+4$, then $C$ is the only irreducible non-degenerate curve contained in the 
 intersection of the quadrics of $K$.
\end{cor}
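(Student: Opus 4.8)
The plan is to deduce the Corollary directly from Theorem~\ref{main_geom}. Since that theorem already handles an arbitrary codimension $k$ subspace $K\subset I_2$, all that is needed is to check that, for a generic curve $C$ of a codimension $k$ subvariety $\cZ\subset\cM_g$, the hypothesis $g>3k+4$ forces both $g>2k+5$ and $\mathrm{Cliff}(C)>k+1$. The first is immediate, since $g>3k+4\ge 2k+5$ for every $k\ge 1$, so the heart of the matter is the lower bound on the Clifford index of a generic member of $\cZ$.

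For this I would invoke the description of the Clifford-index stratification of $\cM_g$. Using the dimension of the gonality loci (computed via Hurwitz spaces: for $2\le d\le (g+2)/2$ the $d$-gonal locus has codimension $g-2d+2$ in $\cM_g$) together with the fact that a general $d$-gonal curve has Clifford index $d-2$, computed by its pencil, one obtains that
\[
\cM_g^{\le c}:=\{C\in\cM_g:\mathrm{Cliff}(C)\le c\}
\]
has codimension $g-2c-2$ in $\cM_g$; the loci of curves of Clifford dimension $\ge 2$ (smooth plane curves of degree $\ge 5$ and the sporadic Clifford-dimension-$3$ family) occur only for special genera and in codimension strictly larger than $g-2c-2$, so they do not lower this estimate. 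Taking $c=k+1$ and using $g>3k+4$ (so $g\ge 3k+5$) gives
\[
\codim_{\cM_g}\cM_g^{\le k+1}=g-2k-4\ \ge\ k+1\ >\ k=\codim_{\cM_g}\cZ .
\]
Assuming $\cZ$ irreducible (otherwise replace it by a component through its generic point), it therefore cannot be contained in $\cM_g^{\le k+1}$, so the generic $C\in\cZ$ satisfies $\mathrm{Cliff}(C)>k+1$. Applying Theorem~\ref{main_geom} to this $C$ and the given $K$ then finishes the proof.

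The only point that is not pure bookkeeping is the inequality $\codim_{\cM_g}\cM_g^{\le c}\ge g-2c-2$, and within it the verification that no component of smaller codimension is produced by line bundles of Clifford dimension $\ge 2$; this is handled by the classification of curves of Clifford dimension $\ge 2$, all of which lie in codimension well above $g-2c-2$ for the relevant values of $g$. Everything else is a direct substitution into the hypotheses of Theorem~\ref{main_geom}.
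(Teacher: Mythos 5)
Your proof is correct and follows essentially the same route as the paper: bound the Clifford index of the generic $C\in\cZ$ from below by comparing $k=\codim\cZ$ with the codimension $g-2c-2$ of the Clifford stratification, and then feed $c>k+1$ together with $g>2k+5$ into Theorem (\ref{main_geom}). The one difference is how the minimality of the gonality stratum's codimension is justified: the paper invokes the Coppens--Martens dichotomy (a curve of Clifford index $c$ is either $(c+2)$-gonal or has $\dim W^1_{c+3}\ge 1$) rather than the classification of curves of Clifford dimension $\ge 2$, which is the safer citation since that classification is not fully established for large Clifford dimension.
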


The Corollary is a consequence of the Theorem (\ref{main_geom}). Indeed, let $c$ be the Clifford index of a generic element of $\cZ$. The locus 
of curves with given Clifford index can have several components and the minimal codimension is attained when $c$ is realized by a $g^1_d$ linear series, with $c=d-2$. This  follows easily by a parameter count from a result in  \cite{CopMar} where it is proved that a curve $C$ with Clifford index $c$ is either $(c+2)$-gonal or the dimension of the Brill-Noether locus $W^1_{c+3}(C)$ is at least $1$. 

Then, by Riemann-Hurwitz, the codimension of the
component of the curves with a $g^1_{c+2}$ linear series is $3g-3-(2g-2+2(c+2)-3)=g-2c-2$. Hence, since we assume $g>3k+4$, we have
\begin{equation*}
 k\ge g-2c-2 >3k+4-2c-2=3k-2c+2.
\end{equation*}
  Therefore $   c>k+1$
 and the result follows from (\ref{main_geom}) since $g>3k+4$ implies $g>2k+5$ for $k>0$.

In section $2$ we start the proof of  Theorem (\ref{main}) under the hypothesis $g>3k+4$ by reducing it to Corollary (\ref{main_cor}) following Voisin's observation indicated above. Theorem 
(\ref{main_geom}) will be proved in Section 3. The idea of the proof is as follows: assuming the existence of a second non-degenerate curve in the intersection of the quadrics we select linearly independent points $x_i$ in this curve. Then, by constructing a suitable rank $2$ vector bundle on $C$, we are able to find points $p_j\in C$ such that the linear span of the points $x_i$ is contained in the linear span of the points $p_j$. From this it is easy to obtain a contradiction by using a theorem of Ran \cite{Ran}.

\vskip 5mm
To prove the divisorial case of the main theorem, we use the original approach in \cite{BarPir}, based on the analysis of the map
\[
\chi_\bZ: \Lambda_\bZ\to \Lambda'_\bZ
\]
(in fact we will work with the dual lattices, that is, with the homology groups).
If we can prove that $\chi_\bZ(\Lambda_\bZ)=n\Lambda'_\bZ,$ $n\in \bZ$ we will get that $\cC'_t$ is isomorphic to
$\cC_t$ and $\chi_t$ is given by multiplication by $n.$
We use degeneration to $\Delta_0,$  and study the monodromy action on $\Lambda_\bZ.$
The basic geometric information is now encoded on the  generalized Jacobians. Roughly speaking one has to prove that part of the limit map $\chi_0$ is  multiplication by $n.$ This gives that the map $\chi_t$ 
is  multiplication by $n$ on
 {\em a part of }{\em the invariant cycles.}  We need finally  to have degenerations with independent monodromy to complete the proof.
It is clear that to follow this strategy one needs  
to control the  degeneration type. Using the theory of divisors on $\cM_g$ and following a valuable suggestion of Gavril Farkas,
  we realize  the above program  when $c=1.$ 
The degeneration procedure is performed in sections $4$, $5$ and $ 6.$ We will prove the existence of a type of degeneration  (if $c=1$)
to the union of a curve of genus $g-2$  and two  generic elliptic tails.
The {\em independent} degenerations to  $\Delta_0$ are obtained by letting the elliptic tails becomes singular. To extract more information from the degeneration we analyze the type of monodromy involved that we classify in three cases (a,b,c of section $5$). Then we analyze the geometry of the generalized
Jacobians by comparing their extension classes.
  In section $6$ we complete the proof by comparing the invariants of the two degenerations.

\vskip 3mm
\textbf{Acknowledgements:} We thank Gavril Farkas for useful and stimulating conversations on the moduli space of curves.  

\vskip 3mm
\section{Reduction to a problem on quadrics trough the canonical curve}

The aim of this section is to prove that Corollary (\ref{main_cor}) implies Theorem (\ref{main}) under the hypothesis $g>3k+4$ and $k>0$.

Remember that  an isogeny $\chi : A'\lra A$ between principally polarized Abelian varieties $(A',L_{A'})$ and $(A,L_{A})$ such that $\chi ^* L_A \cong L_{A'}^{\otimes m}$ 
is determined by a subgroup $H$ of the group of $m$-torsion points  $A'_m$ totally isotropic with respect to the Riemann bilinear form 
\[
 e_m:A'_m\times A'_m \lra \mu_m
\]
(being $A=A'/H$) and a level subgroup $\tilde H $ of the theta group  
$\mathcal G(L_{A'}^{\otimes m})$, see (\cite{MAV}, chapter 23). Then the moduli space of those isogenies can be rewritten as
\[
 \widetilde {\mathcal  A_g}^m=\{\chi:A'\lra A,\,\chi ^*(L_A)\cong L_{A'}^{\otimes m}\}_{/\cong} = \{(A',L_{A'};H,\tilde H)\}_{/\cong} 
\]
and the forgetful map is a finite covering $\varphi :\widetilde {\mathcal A_g}^m \lra \mathcal A_g$. Moreover the map $\psi: \widetilde {\mathcal A_g}^m \lra \mathcal A_g$ 
sending 
$ \chi:A'\lra A$ to $(A,L_A)$ is another covering space.
\vskip 3mm
Given a generic isogeny $\chi:A'\lra A$ we consider tangent spaces in the following diagram:
\[
 \xymatrix@C=1.pc@R=1.8pc{
  \widetilde {\mathcal  A_{g}}^m \ar[rr]^{\psi} \ar[d]_{\varphi }& &\mathcal A_g \\
 \mathcal A_g&
}
\]
and we get an isomorphism $\lambda $ as follows:
\[
 \xymatrix@C=1.pc@R=1.8pc{
 T_{\widetilde {\mathcal  A_{g}}^m,\chi} \ar[rr]^{d\psi}_{\cong} \ar[d]_{d\varphi }^{\cong }& &T_{ \mathcal A_g,A} \ar@{=}[r] & Sym^2H^0(A,T_A) \\
 T_{\mathcal A_g,A'}  \ar@{=}[d] \ar[rru]_{\lambda} && \\
Sym^2H^0(A',T_{A'}) &&&
}
\]

\vskip 3mm
Coming  back to our problem let us assume that the locus of curves in $\mathcal M_g$ with Jacobian isogenous to the Jacobian of some curve in $\cZ$ contains a codimension 
$k$ component $\cZ'\subset \mathcal M_g$. Our hypothesis on $k$ implies that a generic element $C'\in \cZ'$ satisfies $End(JC)\cong  \mathbb Z$ (see \cite{CGT} or \cite{baseN}).   Therefore an isogeny 
$\chi: JC' \lra JC$ must satisfy that the pull-back of the principal polarization in $JC$ is a multiple of the principal polarization in $JC'$. Hence there exists an 
integer $m$  
and an irreducible variety $\mathcal R\subset \widetilde {\mathcal A_g}^m$ dominating $\cZ'$ and $\cZ$ through $\varphi $ and $\psi$ respectively. Set $\m{M}:=\varphi 
^{-1}(\m{M}_g)$ and $\m{M}':=\psi ^{-1}(\m{M}_g)$. Then $\mathcal R \subset \m{M} \cap \m{M}'$. Fix a generic element $\chi:JC' \lra JC$ in $\mathcal R$. In the following 
diagram we consider in the first row the natural inclusions of tangent spaces at $\chi$ and we put in the second row its image by $d\varphi$:
\[
\xymatrix@C=1.pc@R=1.8pc{
 T_{\mathcal R, \chi}   \ar @{^{(}->}[r] \ar[d]^{\cong} & T_{\mathcal {M},\chi}  \ar @{^{(}->}[r] \ar[d]^{\cong} & T_{\mathcal {M},\chi}+ T_{\mathcal {M}',\chi} \ar 
 @{^{(}->}[r] \ar@{=}[d]
& T_{\tilde {\mathcal {A}}_g^m,\chi} \ar[d]^{\cong} \\
 T_{\mathcal Z, JC}   \ar @{^{(}->}[r]   & T_{\mathcal {M}_g,JC}=H^0(C,\omega_C^{\otimes 2})^* \ar @{^{(}->}[r]&\bar T \ar @{^{(}->}[r]& Sym ^2 H^0(C, \omega _C) ^* 
}
\]

Observe that, by the Grassmann formula, the dimension of $\bar T$ is at most $3g-3+k$.
Set $K(C):=\text{Kernel}(Sym^2 H^0(C,\omega _C) \lra \bar T^*)$, this is a subspace of the vector space $I_2(C)$ of the quadrics containing the image of $C$ by the 
canonical  map. The codimension of $K(C)$ in $I_2(C)$ is at most $k$. By using $\psi$ instead of $\varphi $ we get the corresponding vector space $K(C')\subset I_2(C')$ 
and we obtain a canonical isomorphism $K(C)\cong K(C')$. Then the Corollary (\ref{main_cor}) implies that $C$ and $C'$ are isomorphic and, since $End(JC)=\mathbb Z$, the 
isogeny is a multiple of the identity. 

\vskip 3mm
\section{High codimension family of quadrics through the canonical curve}

This section is devoted to the proof of Theorem (\ref{main_geom}). We fix the notation $K\subset I_2$ of the statement. We assume that the intersection of all the 
quadrics  of $K$ contains  an irreducible non-degenerate curve different from $C$. In particular we can select $k+1$ linearly independent points $x_i \in \bigcap _{Q\in 
K}Q \subset \mathbb PH^0(C,\omega_C)^*$ such that $x_i \notin C$.  We choose a representative of $x_i$ in $H^0(C,\omega_C)^*$ and we denote it with the same  symbol. Then 
$
x_i\otimes x_i \in Sym ^2 H^0(C,\omega _C)^*. 
$
 We denote by $L$ the linear variety spanned by these points.
\vskip 3mm
Let $R, R'$ be the quotients $I_2/K$  and  $Sym^2 H^0(C,\omega_C)/K$ respectively. Then we have the diagram of vector spaces
\begin{equation*}
\xymatrix@C=1.pc@R=1.8pc{
& & 0\ar[d] &0\ar[d] &\\
& 0\ar[r] & K \ar[d] \ar[r]  & I_2 \ar[d] \ar[r] & R \ar[r]   &  0 \\
&  & Sym^2H^0(C,\omega_C)\ar@{=}[r]\ar[d]  & Sym^2H^0(C,\omega_C)\ar[d] & &\\
0\ar[r] &R\ar[r]  &R' \ar[r] \ar[d] & H^0(C,\omega_C^{\otimes 2}) \ar[r] \ar[d] &0 & \\
 && 0&0&&
}\end{equation*}
and its dual 
\begin{equation*}
\xymatrix@C=1.pc@R=1.8pc{
 & & 0 &0\ &\\
 0\ar[r] & R^*  \ar[r]  & I_2^* \ar[u] \ar[r] & K^* \ar[r] \ar[u]   &  0 \\
 & & Sym^2H^0(C,\omega_C)^*\ar@{=}[r]\ar[u]  & Sym^2H^0(C,\omega_C)^*\ar[u] & &\\
&0\ar[r] &H^1(C,T_C)= H^0(C,\omega_C^{\otimes 2})^*\ar[u] \ar[r]  &R'^* \ar[r] \ar[u] & R^* \ar[r]  &0 & \\
 && 0\ar[u]&0\ar[u]&&
}\end{equation*}
Since all the quadrics of $K$ vanish on $x_i$ then the image of $L$ in $K^*$ is zero, hence $L\subset R'^*$. Since $L$ has dimension $k+1$ and, by the hypothesis on $K$, 
$\dim R=k$ we have $H^1(C,T_C)\cap L \neq (0).$  Let $\alpha$ be a non-trivial element in this intersection. Looking at $H^1(C,T_C)=Ext^1(\omega_C,\cO_C)$ as classes 
of extensions we associate to $\alpha$ a rank $2$ vector bundle $E_{\alpha }$ and a short exact sequence:
\begin{equation*}
 0\lra \cO_C \lra E_{\alpha }  \lra \omega_C \lra 0.
\end{equation*}
 The coboundary map $H^0(C,\omega_C) \lra H^1(C,\cO_C)$ is the cup-product with $\alpha $. Since $\alpha \in L$, then $\alpha =\sum_{i=1}^{k+1}a _i \, x_i \otimes x_i$. 
 Therefore, denoting by $H_i$ the kernel of the form $x_i:H^0(C,\omega_C)\lra \mathbb C$, the intersection $H_1\cap \dots \cap H_{k+1}$ is contained in 
$Ker(\cdot \cup \alpha)$, in fact
\begin{equation}\label{Kernel_cup}
  Ker(\cdot \cup \alpha) = \bigcap _{i \text{ with }a_i\ne 0}H_i.
\end{equation}
We can assume that $x_1,\dots,x_{k'}$, $k'\le k+1$, are the points such that $a_i\ne0$. Then there are  $g-k'$ sections of $H^0(C,\omega_C)$ lifting to $E_{\alpha }$. 
Let $W\subset H^0(C,E_{\alpha})$ be the vector space generated by these sections.
We consider the wedge product of sections:
\begin{equation*}
\psi :\Lambda ^2W \hookrightarrow \Lambda ^2 H^0(C, E_{\alpha}) \lra H^0(C, \det  E_{\alpha }) \,=\, H^0(C,\omega_C).
\end{equation*}
The hypothesis $g>2k+5$ implies that the projectivization of the kernel of $\psi $ (which has codimension at most $g$) intersects in $\mathbb P (\Lambda ^2 W)$ the Grassmannian of the decomposable elements. Hence there are two sections $s_1,s_2\in W\subset H^0(C,E_{\alpha})$ such that $s_1\wedge s_2=0$.
This means that they generate 
a rank $1$ torsion free sheaf $M_{\alpha }\subset E_{\alpha}$, hence a line bundle. By construction $h^0(C,M_{\alpha})\ge 2$. Let us consider the quotient of sheaves $Q_{\alpha}=E_{\alpha}/M_{\alpha}$ and let $Q^0_{\alpha} $ be the quotient of $Q_{\alpha}$ by its torsion subsheaf. The kernel $L_{\alpha}$ of the natural map $E_{\alpha}\mapsto Q^0_{\alpha}$ is a line bundle that by construction contains $M_{\alpha}$. In particular  $h^0(C,L_{\alpha})\ge 2$.
Observe that $Q^0_{\alpha}\cong \omega _C \otimes L_{\alpha }^{-1}$. We get a diagram:
\begin{equation}\label{diagram}
\xymatrix@C=1.pc@R=1.8pc{
 & & 0\ar[d] & &\\
& & L_{\alpha } \ar[d] & & \\
0 \ar[r] & \cO_C \ar[rd]^{\rho} \ar[r] &E_{\alpha } \ar[r] \ar[d]  &\omega_C \ar[r] &0 \\
 & & \omega _C \otimes L_{\alpha }^{-1} \ar[d] & & \\
 && 0 &&
}\end{equation}
Note that $\rho \ne 0$, otherwise the section of $E_{\alpha}$ represented by the horizontal arrow $\cO \mapsto E_{\alpha}$ would belong to $W$ which contradicts the definition of $W$.

\vskip 3mm
Observe that the existence of the map $\rho $ implies that $h^0(C,\omega_C\otimes L_{\alpha }^{-1})$ is positive
We distinguish two cases:

\vskip 3mm
Case 1: $h^0(C,\omega_C\otimes L_{\alpha }^{-1}) \ge 2$. Then we can use $L_{\alpha}$ to compute the Clifford index of the curve. We have:
\begin{equation*}
 h^0(C,L_{\alpha})+h^0(C,\omega_C\otimes L_{\alpha}^{-1})\ge h^0(C,E_{\alpha }) \ge g-k'+1
\end{equation*}
that combined with Riemann-Roch gives $2 h^0(C,L_{\alpha})\ge deg(L_{\alpha})+2-k'$. Therefore 
\begin{equation*}
  deg(L_{\alpha}) -2 h^0(C,L_{\alpha})+2 \le k'\le k+1
\end{equation*}
which is contradiction, since $k+1<c$ by hypothesis.

\vskip 3mm
Case 2:  $h^0(C,\omega_C\otimes L_{\alpha }^{-1}) = 1$. Then $h^0(C,L_{\alpha})\ge g-k'$. Let $e$ be the degree of $\omega_C\otimes L_{\alpha}^{-1}\cong \cO_C(p_1+\dots 
+p_e)$. 
\vskip 3mm 
Claim: we have that $e\le k'$. 
\vskip 5mm
Indeed, since $\rho$ induces an isomorphism $H^0(C,\cO_C)\cong H^0(C,\omega_C\otimes L_{\alpha}^{-1})$, then the map $H^0(C,E_{\alpha})\lra  H^0(C,\omega_C\otimes 
L_{\alpha}^{-1})$ is surjective and we get:
\begin{equation*}
 g-k'\le h^0(C,L_{\alpha})=h^0(C,\omega_C\otimes L_{\alpha}^{-1})+2g-2-e+1-g=g-e,
\end{equation*}
the claim follows. 
\vskip 3mm
Coming back to diagram (\ref{diagram}) we obtain 
\begin{equation*}
H^0(C,L_{\alpha})=H^0(C,\omega_C(-p_1-\dots -p_e)) \subset Ker (\cdot \cup \alpha)=\bigcap _{i=1,\dots,k'}H_i.
\end{equation*}
By dualizing, we obtain the inclusion of linear spans
\begin{equation*}
\langle x_1,\dots,x_{k'}\rangle \subset \langle p_1,\dots ,p_e \rangle .
\end{equation*}

\vskip 3mm
We denote by $\tC _0$ a non-degenerate irreducible curve, $\tC _0\ne C$, contained in all the quadrics parametrized by $K$. Let $\tC$ be the normalization of $\tC_0$ and let $\gamma $ be the normalization map. By choosing generically the $k+1$ points $x_i \in 
\tC$, we can assume that $k'$ and $e$ are constant, so the correspondence:
\begin{equation*}
 \Gamma =\{ (x_1+\dots +x_{k'},p_1+\dots+p_e) \,|\, \langle \gamma (x_1),\dots ,\gamma (x_{k'}) \rangle \subset \langle p_1,\dots,p_e \rangle \} \subset \tC^{\, (k')}\times C^{(e)}
\end{equation*}
 dominates $\tC^{\, (k')}$. Moreover, since $\tC$ is non-degenerate, the fibers of $\pi_2:\Gamma \lra C^{(e)}$ must be finite. Since $e\le k' \le \dim \Gamma =\dim 
 \pi_2(\Gamma)\le e$ we obtain that $e=k'$.
On the other hand the natural rational maps
\begin{equation*}
\begin{aligned}
& C^{(e)}\dasharrow Sec^e(C) \subset Grass(e-1,\mathbb P^{g-1})  \\
&  \tC^{(e)}\dasharrow Sec^e(\tC_0) \subset Grass(e-1,\mathbb P^{g-1})
\end{aligned}
\end{equation*}
are generically injective by the uniform position theorem (remember $e=k'\le k+1$ and $2k+5 < g$), hence the correspondence $\Gamma $ is of bidegree $(1,1)$ and therefore $\tC ^{\,(e)}$ and $C^{(e)}$ are birational.
In particular $g(\tC)=g(C)=g$ (the induced map on Jacobians $J\tC \lra JC$ has to be dominant since the image generates and the same in the opposite direction). By Ran's 
Theorem on symmetric products, see \cite{Ran}, we get $C\cong \tC$. Observe that both curves have to be canonical, hence $\tC_0$ is smooth and there is a linear projective transformation $\varphi :\tC\mapsto C$. 
Coming back to our argument and choosing $k$ generic points  $x_i \in \tC $ we have that $\langle x_1,\dots ,x_k \rangle = \langle \varphi (x_1),\dots ,
\varphi(x_k) \rangle$, hence $\varphi $ leaves $Sec^k (C)$ invariant and must be the identity, so $\tC=C$  which is a contradiction.

\vskip 3mm
\section{Divisor case, intersection with the boundary}

Now we start the proof of the codimension $1$ case of the main Theorem assuming $g\ge 5$. We put now $\cD$ instead of $\cZ$. The initial step of our degeneration procedure  
is to show that the intersection of $\cD$ with the boundary contains appropriate stable curves. These curves  have to contain enough information to deduce from them the 
main result for the general smooth curve. The goal of this section is to define a family of convenient reducible curves and to prove that they appear in the closure of 
$\cD$.

We start by recalling the following well-known facts on the rational Picard group of the compactified moduli space $ {\overline{\m{M}}_{g}}$ of stable curves (see for instance \cite{ACG}):
\[
Pic_{{\Q}}\,\m{M}_g=\lambda \Q,
\]
where $\lambda $ is the Hodge class. Moreover:
\[
 {\overline{\m{M}_g}} \setminus \m{M}_{g}= \bigcup_{i=0}^{[\frac{g-1}{2}]}\Delta_{i}
\]
and
\[
Pic_{\, \Q}\pa{{\overline{\m{M}_g}}}=\langle \lambda, \delta_{0},\delta_{1} , \ldots, \delta_{{[\frac{g-1}{2}]}}\rangle \, \Q
\]
\vskip 3mm
where $\delta_{i},\, i>0$ is the class of the divisor $\Delta_{i} $ whose general point represents a nodal curve $C_1 \cup C_2$, $C_1,\, C_2$ being integral, smooth curves 
of genus $i$ and $g-i$ intersecting in one point. And $\delta_0$ is the class of $\Delta_0$ whose general point represents an irreducible curve with exactly one node.

We denote by $d$ the class of $\overline{\mathcal D}$ in the rational Picard group. Then we can write:
\begin{equation}\label{expansion_d}
 d=a\lambda +\Sigma_{i\ge 0}a_i \delta _i.
\end{equation}

\begin{rem}\label{a_not_zero}
We note that $a$ must be different from zero. Otherwise the class of $\cD$ in the rational Picard group of $\cM_g$ would be zero.
Since the Satake compactification $\overline{\mathcal M}_g^s$ of $\cM_g$ is a projective variety and  the boundary $\overline{\mathcal M}_g^s - \cM_g$ has codimension $2$, given a smooth point $p$ of $\cD$,
there exists a complete curve $C$ in $\cM_g$ going through $p$ and cutting $\cD$ transversally. Hence $C\cdot \cD \ne 0$ which is a contradiction.
\end{rem}

Now we consider a complete integral curve $B$ in $\m{M}_{g-2}$ (it exists because $g-2\ge 3$) and  we fix two elliptic curves $E_ 1$, $E_2$ with arbitrary $j$-invariants $j_1,j_2\, \in \mathcal M_1$. 
Denote by $\Gamma_b$ the smooth curve of genus $g-2$ corresponding to $b$. We consider the set of the stable curves obtained by glueing to $\Gamma_b $ the two elliptic 
curves in two distinct points $p_1$ and $p_2$ of $\Gamma_b$. This does not depend on the choice of the points on the elliptic curves. This family is parametrized by the 
symmetric product $\Gamma_b^{(2)}\setminus \Delta_{\Gamma_b}$ minus the diagonal.
\vskip 3mm
\begin{tikzpicture}[line cap=round,line join=round,>=triangle 45,x=1.0cm,y=1.0cm]
\clip(-3.00,2.16) rectangle (4.12,5.64);
\draw [shift={(7.25,3.94)}] plot[domain=2.83:3.55,variable=\t]({1*5.6*cos(\t r)+0*5.6*sin(\t r)},{0*5.6*cos(\t r)+1*5.6*sin(\t r)});
\draw [shift={(2.5,1.2)}] plot[domain=1.09:2.25,variable=\t]({1*3.13*cos(\t r)+0*3.13*sin(\t r)},{0*3.13*cos(\t r)+1*3.13*sin(\t r)});
\draw [shift={(2.38,1.47)}] plot[domain=0.85:2.56,variable=\t]({1*2.09*cos(\t r)+0*2.09*sin(\t r)},{0*2.09*cos(\t r)+1*2.09*sin(\t r)});
\draw (1.24,5.66) node[anchor=north west] {$ \Gamma_b $};
\draw (3.22,5) node[anchor=north west] {$E_1$};
\draw (3.16,3.26) node[anchor=north west] {$E_2$};
\begin{scriptsize}
\fill [color=black] (1.66,4.21) circle (1.5pt);
\draw[color=black] (2.14,4.74) node {$p_1$};
\fill [color=black] (1.68,3.44) circle (1.5pt);
\draw[color=black] (2.1,3.4) node {$p_2$};
\end{scriptsize}
\end{tikzpicture}
\vskip 3mm

So we have a well-defined map
\[
\begin{aligned}
 \Gamma_b^{(2)}\setminus \Delta_{\Gamma_b} &\lra \Delta _1 \subset \overline {\mathcal M_g} \\
p_1+p_2 &\mapsto E_1\cup_{p_1} \Gamma \cup_{p_2} E_2,
\end{aligned}
\]
  which extends to the whole symmetric product by sending $2p$ to the following curve: glue the infinity point of a $\mathbb P^1$ with the point $p$ and then glue $E_1$, 
  $E_2$ to other two points in the line. 
  \vskip 3mm
  \begin{tikzpicture}[line cap=round,line join=round,>=triangle 45,x=1.0cm,y=1.0cm]
\clip(-3.00,1.38) rectangle (6.72,4.84);
\draw [shift={(5.82,3.46)}] plot[domain=2.93:3.5,variable=\t]({1*3.87*cos(\t r)+0*3.87*sin(\t r)},{0*3.87*cos(\t r)+1*3.87*sin(\t r)});
\draw [shift={(4.5,-4.05)}] plot[domain=1.39:2.02,variable=\t]({1*7.7*cos(\t r)+0*7.7*sin(\t r)},{0*7.7*cos(\t r)+1*7.7*sin(\t r)});
\draw (5.48,4.24) node[anchor=north west] {$\mathbb P^1$};
\draw [shift={(9.39,4.24)}] plot[domain=3.2:3.49,variable=\t]({1*6.26*cos(\t r)+0*6.26*sin(\t r)},{0*6.26*cos(\t r)+1*6.26*sin(\t r)});
\draw [shift={(13.43,5.53)}] plot[domain=3.32:3.51,variable=\t]({1*9.02*cos(\t r)+0*9.02*sin(\t r)},{0*9.02*cos(\t r)+1*9.02*sin(\t r)});
\draw (2.36,4.64) node[anchor=north west] {$\Gamma_b$};
\draw (3.7,2.82) node[anchor=north west] {$E_1$};
\draw (5.38,2.82) node[anchor=north west] {$E_2$};
\begin{scriptsize}
\fill [color=black] (1.96,3.22) circle (1.5pt);
\draw[color=black] (2.12,3.48) node {$p$};
\end{scriptsize}
\end{tikzpicture}
  \vskip 3mm
  We note that these curves also belong to $\Delta_2$.

  Finally, by moving $b$ in the curve $B$ we obtain a complete threefold $T\subset \Delta_1$. In other words, this threefold can be seen as the image in $\overline {\mathcal 
  M_g}$ of the relative  symmetric product over $B$:
\[
 T=\bigcup_{b\in B} \Gamma_b^{(2)}.
\]
 
Our aim is to study the restriction of the divisor $\overline {\mathcal D}$ to $T$. To do this we make a computation in $Pic_{\Q}(T)$. Denote by $S$ the surface in $T$ 
obtained as the union of all the diagonals:
\[
 S=\bigcup_{b\in B} \Delta_{\Gamma_b}.
\]
 
We will need the following vanishing results.
\begin{lem}\label{lemma: delta_1_zero_on_S}
 The restriction of the class $\delta_1$ to $S$ is zero: $\delta_{1}|_S=0$.
\end{lem}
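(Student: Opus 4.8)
The plan is to use that every stable curve parametrized by $S$ carries the \emph{fixed} elliptic tail $E_1$, attached at a node, and to compute $\delta_1|_S$ by pulling back through the clutching morphism of $\overline{\mathcal M}_g$ along $\Delta_1$. Working with $\Q$-coefficients (and, if necessary, after a finite base change of $B$ — harmless — so that a universal curve is available, or directly on the moduli stack), I would first exhibit a lift. Since we may assume $j_1\neq j_2$, the elliptic tail $E_1$ of $X_p=E_1\cup\mathbb P^1\cup E_2\cup_p\Gamma_b$ is canonically distinguished; writing $Y_p$ for the complementary genus $g-1$ curve $\mathbb P^1\cup E_2\cup_p\Gamma_b$, pointed at the node of $\mathbb P^1$ that carried $E_1$, the assignment $(b,p)\mapsto\bigl([E_1],[Y_p]\bigr)$ defines a morphism
\[
f\colon S\lra \overline{\mathcal M}_{1,1}\times\overline{\mathcal M}_{g-1,1},\qquad \xi\circ f = m|_S ,
\]
where $\xi\colon\overline{\mathcal M}_{1,1}\times\overline{\mathcal M}_{g-1,1}\to\overline{\mathcal M}_g$ is the clutching map onto $\Delta_1$ and $m$ is the modular map. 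Hence $\delta_1|_S = f^{*}\xi^{*}\delta_1$.

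I would then invoke the standard formula for the self-intersection of a boundary divisor (see \cite{ACG}): $\xi^{*}\delta_1=-\psi_1-\psi_2$, where $\psi_1,\psi_2$ denote the cotangent-line classes at the marked points of the two factors. It remains to see that $f^{*}\psi_1$ and $f^{*}\psi_2$ both vanish in $\Pic_{\Q}(S)$. The first is immediate: the composite $S\xrightarrow{f}\overline{\mathcal M}_{1,1}\times\overline{\mathcal M}_{g-1,1}\to\overline{\mathcal M}_{1,1}$ is the constant map with value $[E_1]$ (the elliptic tail is rigid, and since $E_1$ is elliptic the position of the marked point is irrelevant), so $f^{*}\psi_1=0$. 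For the second: along $S$ the combinatorial type of the fibers of the universal curve is constant — $\Gamma_b$ stays smooth of genus $g-2$ because $B$ is complete in $\mathcal M_{g-2}$, the curves $E_1,E_2$ are fixed and smooth, and the gluing point $p$ is always a smooth point of $\Gamma_b$ — so the $\mathbb P^1$-components carrying the marked point of $Y_p$ sweep out a $\mathbb P^1$-bundle over $S$ equipped with three disjoint sections (the three nodes of $\mathbb P^1$ inside $X_p$). A $\mathbb P^1$-bundle with three disjoint sections is trivial, hence the relative dualizing sheaf along the marked-point section is pulled back from $\mathbb P^1$ and therefore trivial; thus $f^{*}\psi_2=0$. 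Combining, $\delta_1|_S = f^{*}(-\psi_1-\psi_2)=0$.

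The only delicate point is the preliminary reduction: making sure the clutching morphism and the $\psi$-classes are literally available for $S$ and that the quoted identity $\xi^{*}\delta_1=-\psi_1-\psi_2$ applies. This is handled by the finite base change (respectively the stack-theoretic reformulation) above, together with the verification — already indicated — that the dual graph of the fibers does not jump along $S$, which is exactly what makes the $\mathbb P^1$-components into an honest $\mathbb P^1$-bundle. Everything else is the routine self-intersection computation, and the same argument applied to the tail $E_2$ instead of $E_1$ gives a consistent answer.
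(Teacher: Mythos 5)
Your route is genuinely different from the paper's (the paper realizes $S$ as a specialization of surfaces $S_C$ obtained by gluing a \emph{smooth} one‑pointed genus~$2$ curve $C$ to $\Gamma_b$; these are disjoint from $\Delta_1$, so $\delta_1|_{S_C}=0$, and the vanishing passes to the limit $S$). Your clutching computation could also work, but as written it has a concrete gap in the step you yourself flag as delicate: the identity $\xi^{*}\delta_1=-\psi_1-\psi_2$ is not the full formula. It computes $\mathcal O(\Delta_1)$ only at points where the clutched node is the \emph{unique} node of type $1$; in general
\[
\xi^{*}\delta_1=-\psi_1-\psi_2+[\Xi],
\]
where $\Xi\subset\overline{\mathcal M}_{1,1}\times\overline{\mathcal M}_{g-1,1}$ is the divisor of pairs whose glued curve has a \emph{second} separating node of type $1$ --- concretely $\Xi=\overline{\mathcal M}_{1,1}\times\Delta_{1,\emptyset}$, the locus where the genus $g-1$ factor carries an elliptic tail away from its marked point. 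This omission is not harmless here: every curve parametrized by $S$ has the second elliptic tail $E_2$, so $f(S)$ lies entirely inside $\Xi$, and the term you drop is exactly the one whose restriction to $S$ is a priori nonzero. Locally along $S$ the divisor $\Delta_1$ has two branches ($t_1=0$ and $t_2=0$, the smoothing parameters of the two elliptic nodes), and your argument only accounts for one of them.

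The gap is repairable by your own methods: since $\Xi$ is again the image of a clutching map and $f(S)\subset\Xi$, the class $[\Xi]$ restricts on $f(S)$ to $-\psi'-\psi''$ computed at the $E_2$-node, and these vanish on $S$ for precisely the reasons you give for $\psi_1$ and $\psi_2$ ($E_2$ is a fixed elliptic curve, and the attaching point $1$ of the three‑pointed $\mathbb P^1$ is rigid, the $\mathbb P^1$'s forming a trivial bundle over $S$). In fact the cleanest version of your idea bypasses the clutching formalism altogether: $\mathcal O(\Delta_1)|_S$ is the tensor product of the two smoothing line bundles $T_{x}E_i\otimes T_{x_i}\mathbb P^1$ at the two elliptic nodes, each of which is visibly constant along $S$ because $E_1$, $E_2$ and the three‑pointed $\mathbb P^1$ do not move; phrased this way the two branches of $\Delta_1$ cannot be overlooked. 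Either repair yields the lemma, but the proof as submitted relies on an identity that fails on all of $S$.
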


\begin{proof}
We fix a smooth curve $C$ of genus $2$ with a marked point $x$. We glue $C$ with $\Gamma_b$ identifying $x$ with $p\in \Gamma_b$. 
 Then, by moving $p$ in $\Gamma_b$ and $b$ in $B$, we construct an algebraic surface $S_C$ such that $ \Delta _1 \cap S_C=\emptyset$. Therefore $\delta_1 \cdot S_C=0$.
Now we degenerate $C$ to a genus $2$ curve with a marked point consisting in the two elliptic curves $E_1, E_2$ glued to a $\mathbb P^1$ in $0$ and $1$ respectively and infinity being the marked point. Therefore by adding the curve $\Gamma _b$ identifying $\infty$ with $p$ we get our surface $S$ as a limit of a family of algebraic surfaces $S_C$ as above. We get that $\delta_1\cdot S=0$.
\end{proof}

\begin{lem}
\label{lemma: lambda}
For each $b\in B$, $\lambda|_{\Delta_{\Gamma_b}}=0$.
\end{lem}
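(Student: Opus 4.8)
The plan is to compute $\lambda$ restricted to $\Delta_{\Gamma_b}$ by identifying this curve with $\Gamma_b$ via $p\mapsto 2p$ and then factoring the resulting family of stable curves through a constant map. Under the map of Section~4, the point $2p\in\Gamma_b^{(2)}$ goes to the stable curve $X_p=\Gamma_b\cup_p Y$ obtained by identifying $p\in\Gamma_b$ with the marked point $y$ of the \emph{fixed} $1$-pointed curve $(Y,y)$ of genus $2$, where $Y=E_1\cup\mathbb P^1\cup E_2$ has $E_1,E_2$ glued to two points of $\mathbb P^1$ and $y\in\mathbb P^1$ is a third point. As $p$ runs over $\Gamma_b$ the components $\Gamma_b$, $\mathbb P^1$, $E_1$, $E_2$ never change; only the clutching point on $\Gamma_b$ moves. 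So $\lambda|_{\Delta_{\Gamma_b}}$ is $c_1$ of the Hodge bundle of the family $\{X_p\}_{p\in\Gamma_b}$, and I claim that bundle is trivial.

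First I would realise this family as the pull-back of the universal curve along
\[
\Gamma_b\ \xrightarrow{\ p\,\mapsto\,(\Gamma_b,\,p)\ }\ \overline{\m{M}}_{g-2,1}\ \longrightarrow\ \overline{\m{M}}_{g},
\]
the second arrow being the clutching morphism attaching the fixed curve $(Y,y)$ at the marked point. Since the Hodge class is additive under clutching (see \cite{ACG}), $\lambda|_{\Delta_{\Gamma_b}}$ is the sum of the pull-back of the Hodge class of $\overline{\m{M}}_{g-2,1}$ and the pull-back of the Hodge class of $\overline{\m{M}}_{2,1}$ evaluated at the fixed point $(Y,y)$. The latter vanishes, being pulled back from a point. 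For the former, the Hodge class of $\overline{\m{M}}_{g-2,1}$ is itself pulled back from $\overline{\m{M}}_{g-2}$ (it does not see the marked point), and the composite of $p\mapsto(\Gamma_b,p)$ with the forgetful map $\overline{\m{M}}_{g-2,1}\to\overline{\m{M}}_{g-2}$ is the constant map with value $[\Gamma_b]$; hence this summand too is pulled back from a point and vanishes. Therefore $\lambda|_{\Delta_{\Gamma_b}}=0$.

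For a self-contained variant one computes the Hodge bundle directly. Each $X_p$ has a tree-shaped dual graph, hence is of compact type, and a global section of $\omega_{X_p}$ restricts on every component to a $1$-form with at worst simple poles at the nodes and opposite residues along the two branches of each node. On each $E_i$ such a form has no pole, since $\omega_{E_i}\cong\cO_{E_i}$, so its residue at the node of $E_i$ vanishes; this forces the residues of the central $\mathbb P^1$-component at two of its three special points to vanish, hence also at the third by the residue theorem on $\mathbb P^1$, so that component carries the zero form. Consequently $H^0(X_p,\omega_{X_p})\cong H^0(\Gamma_b,\omega_{\Gamma_b})\oplus H^0(E_1,\omega_{E_1})\oplus H^0(E_2,\omega_{E_2})$, canonically and independently of $p$; carrying this out over the tautological family $\pi\colon\mathcal X\to\Gamma_b$, whose components are the constant families $\Gamma_b\times\Gamma_b$, $\mathbb P^1\times\Gamma_b$ and $E_i\times\Gamma_b$, the Hodge bundle $\pi_*\omega_{\mathcal X/\Gamma_b}$ becomes a direct sum of trivial bundles (the $\mathbb P^1$-summand being $0$), so $\lambda|_{\Delta_{\Gamma_b}}=c_1(\pi_*\omega_{\mathcal X/\Gamma_b})=0$. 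In either version the one step that deserves a word of care is the passage from the fibrewise identification of $H^0(\omega)$ to the bundle-level assertion — equivalently, the additivity of $\lambda$ along the clutching maps — which is standard.
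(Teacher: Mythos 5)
Your proof is correct and is, in substance, the same as the paper's: the paper disposes of this lemma with the single sentence ``The Hodge structure is constant along the diagonal,'' and your two arguments (additivity of $\lambda$ under clutching with a fixed genus-$2$ tail, and the direct residue computation showing $H^0(X_p,\omega_{X_p})\cong H^0(\Gamma_b,\omega_{\Gamma_b})\oplus H^0(E_1,\omega_{E_1})\oplus H^0(E_2,\omega_{E_2})$ canonically in $p$) are exactly the details behind that sentence. No gap; you have simply written out the verification the authors left implicit.
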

\begin{proof}
 The Hodge structure is constant along the diagonal.
\end{proof}

We also will use the following basic observation:
\begin{lem}
\label{lemma:nonzero}
Let $N$ be a complete curve in ${\bar{\m{M}}}_g$. Then $\rho|_{N}\neq 0$ for at least one class $\rho \in \{\lambda, \delta_0,\dots, \delta_{{[\frac{g-1}{2}]}}\}$.
\end{lem}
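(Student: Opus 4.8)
The plan is to argue by contradiction, using the projectivity of the Satake compactification $\overline{\m{M}}_g^s$, exactly as in Remark \ref{a_not_zero}. Suppose $\rho|_N = 0$ for every $\rho \in \{\lambda, \delta_0, \dots, \delta_{[(g-1)/2]}\}$. Since these classes generate $Pic_{\Q}(\overline{\m{M}}_g)$, this would force $\xi|_N = 0$ for \emph{every} $\xi \in Pic_{\Q}(\overline{\m{M}}_g)$, i.e.\ $\deg(\mathcal{L}|_N) = 0$ for every line bundle $\mathcal{L}$ on $\overline{\m{M}}_g$. The goal is to contradict this by exhibiting an effective ample (or at least nonzero nef) divisor class that $N$ meets positively.

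First I would reduce to $\overline{\m{M}}_g^s$: there is a surjective morphism $\sigma\colon \overline{\m{M}}_g \to \overline{\m{M}}_g^s$, and since the boundary $\overline{\m{M}}_g^s \setminus \m{M}_g$ of the Satake compactification has codimension $2$ (as used in Remark \ref{a_not_zero}), the curve $N$ either lies in a fibre of $\sigma$ or maps to a complete curve $\sigma(N)$. In the latter case, pulling back an ample class $H$ on the projective variety $\overline{\m{M}}_g^s$ gives $0 < H\cdot \sigma(N) = \sigma^*H \cdot N$, and since $\sigma^*H \in Pic_{\Q}(\overline{\m{M}}_g)$ is a combination of $\lambda, \delta_0,\dots,\delta_{[(g-1)/2]}$, at least one of these classes has nonzero degree on $N$, a contradiction. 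In the former case, $N$ is contracted by $\sigma$; but positive-dimensional fibres of $\sigma$ are supported in the boundary divisors $\bigcup \Delta_i$ (a point of $\m{M}_g$ is determined by its image in $\overline{\m{M}}_g^s$ up to the finite identifications that do not affect curves, while the fibres of $\sigma$ over boundary strata are precisely the loci of curves differing only in a genus-$\le 1$ or connecting component), so $N$ meets $\bigcup_i \Delta_i$ and in fact $N \subset \bigcup_i \Delta_i$; then on one component $\Delta_{i_0}$ containing $N$, the normal-bundle/self-intersection considerations give $\delta_{i_0}\cdot N \ne 0$, or else one repeats the argument on the boundary stratum, which has a projective Satake-type compactification of its own, inducting on $g$ (the base case $g=2,3$ being checked directly). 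Either way some $\rho|_N \ne 0$.

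The main obstacle I expect is the case where $N$ is entirely contracted by $\sigma\colon \overline{\m{M}}_g \to \overline{\m{M}}_g^s$, i.e.\ $N$ lies in a fibre of the Satake map: here one must genuinely use the structure of these fibres (they consist of stable curves whose ``non-elliptic-tail'' part is fixed, so they live in the boundary) together with the fact that the Hodge class $\lambda$ is trivial on such loci (Lemma \ref{lemma: lambda}), which is exactly why one cannot detect $N$ with $\lambda$ alone and must fall back on the boundary classes $\delta_i$. Making the self-intersection $\delta_{i_0}\cdot N \ne 0$ rigorous — or, cleanly, setting up the induction on $g$ via the gluing maps $\overline{\m{M}}_{i}\times\overline{\m{M}}_{g-i}\to \Delta_i$ and $\overline{\m{M}}_{g-1}\to \Delta_0$ which are finite onto their images and compatible with the ample classes — is the technical heart; everything else is the standard ``complete curve in a projective variety has positive degree against an ample class'' argument already rehearsed in Remark \ref{a_not_zero}.
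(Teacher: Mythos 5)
Your opening move is the right one: since $\lambda,\delta_0,\dots,\delta_{[(g-1)/2]}$ generate $Pic_{\Q}(\overline{\m{M}}_g)$, the assumption $\rho|_N=0$ for all of these classes forces every class in $Pic_{\Q}(\overline{\m{M}}_g)$ to have degree zero on $N$. But at that point you should simply invoke the projectivity of the Deligne--Mumford compactification $\overline{\m{M}}_g$ itself: an ample class $H$ on the projective variety $\overline{\m{M}}_g$ satisfies $H\cdot N>0$ for any complete curve $N\subset\overline{\m{M}}_g$, and writing $H$ as a rational combination of $\lambda$ and the $\delta_i$ shows at once that one of them has nonzero degree on $N$. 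This is presumably why the paper records the lemma as a ``basic observation'' with no proof. The detour through the Satake compactification is what Remark (\ref{a_not_zero}) genuinely needs---there one must produce a complete curve inside the \emph{open} locus $\m{M}_g$, which is where the codimension-$2$ Satake boundary matters---but it is not needed here, where $N$ already sits in a projective variety.

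The detour is not merely inefficient; it creates a real gap in the case you yourself flag as the technical heart, namely when $N$ is contracted by $\sigma:\overline{\m{M}}_g\to\overline{\m{M}}_g^s$. The assertion that ``normal-bundle/self-intersection considerations give $\delta_{i_0}\cdot N\ne 0$'' for a curve $N$ contained in $\Delta_{i_0}$ is false in general: the restriction of a boundary class to a subvariety of that same boundary component can vanish, and the paper's own Lemma (\ref{lemma: delta_1_zero_on_S}) exhibits exactly this, a surface $S\subset\Delta_1$ with $\delta_1|_S=0$, swept out by complete curves of $\delta_1$-degree zero. The fallback of inducting on $g$ through the gluing maps onto the boundary divisors could probably be carried out (these maps are finite onto their images and the pullbacks of $\lambda$ and the $\delta_j$ are known), but as written it is only a sketch, and the whole case split disappears once you use that $\overline{\m{M}}_g$ is projective.
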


The main result of this section is the following
\begin{prop}\label{prop:limits}
 The restriction $d|_T$ is not a multiple of the class of $S$ in $Pic_{\Q}(T)$, i.e. $d|_T \ne m S$ for all $m\in \mathbb Q$. In particular $\overline {\mathcal D} \cap 
 T\ne \emptyset$ and this intersection contains elements outside $S$. 
\end{prop}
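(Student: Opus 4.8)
The plan is to compute the intersection number of $d|_T$ against a suitable complete curve inside $T$, and separately the intersection of $S$ with the same curve, and show they cannot be proportional. Concretely, I would pick a complete curve $N\subset T$ transverse to $S$ and disjoint from $S$ (for instance, fix a general pair of distinct points $p_1\ne p_2$ varying appropriately, or fix $p_1,p_2$ on a fixed $\Gamma_{b_0}$ and let $b$ vary along $B$, or degenerate one elliptic tail). On such an $N$, one has $S\cdot N=0$ by construction, so if $d|_T=mS$ in $Pic_\Q(T)$ then $d\cdot N=0$ for every such $N$. The strategy is then to produce one complete curve $N$ with $S\cdot N=0$ but $d\cdot N\ne 0$, which forces $d|_T\ne mS$ and hence $\overline{\mathcal D}\cap T\ne\emptyset$ with nonempty intersection off $S$.

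To evaluate $d\cdot N$, I would use the expansion \eqref{expansion_d}, $d=a\lambda+\sum_{i\ge0}a_i\delta_i$, together with the vanishing lemmas. First, since $T\subset\Delta_1$ and the general point of $T$ (indeed the generic curve $E_1\cup_{p_1}\Gamma_b\cup_{p_2}E_2$) lies in the smooth locus of the boundary divisors we care about, many $\delta_i|_T$ either vanish or are controlled: $\delta_i|_T=0$ for $i\ne 1,2$ because curves in $T$ have only elliptic/genus-$(g-2)$ components so cannot degenerate into those strata without leaving $T$, while $\delta_1|_T$ and $\delta_2|_T$ are expressible via the boundary geometry of the relative symmetric product. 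The key input is Lemma \ref{lemma: delta_1_zero_on_S} and Lemma \ref{lemma: lambda}: choosing $N$ inside $S$ (or a limit of such) kills $\delta_1|_N$ and $\lambda|_N$. If I instead take $N$ to be a curve where the $j$-invariant of, say, $E_1$ degenerates to $\infty$ (an elliptic tail acquiring a node), then $N$ meets $\Delta_0$, and I can compute $\delta_0\cdot N$ as a positive number (one node appearing), while still keeping $\lambda\cdot N$ and the other $\delta_i\cdot N$ under control and $S\cdot N=0$. Since $a\ne0$ by Remark \ref{a_not_zero}, and by Lemma \ref{lemma:nonzero} not all classes restrict to zero on a complete curve, a careful bookkeeping shows $d\cdot N\ne0$.

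In more detail, I would argue as follows. Suppose for contradiction $d|_T=mS$ in $Pic_\Q(T)$. Restrict both sides to the surface $S\subset T$ itself (or to complete curves in $S$): by Lemma \ref{lemma: lambda}, $\lambda|_S=0$, and by Lemma \ref{lemma: delta_1_zero_on_S}, $\delta_1|_S=0$; the remaining $\delta_i|_S$ either vanish for degree reasons or contribute in a way that, combined with $a\ne0$, gives $d|_S=0$ unless some boundary term survives. Meanwhile $S|_S=S^2$ need not vanish, but one can choose a complete curve $N\subset S$ with $S\cdot N=0$ (e.g. $N$ a fiber-type curve over $B$ along which the normal direction to $S$ is constant), forcing $0=d\cdot N$. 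Then comparing with the computation of $d\cdot N$ via \eqref{expansion_d} — where $\lambda\cdot N$ and $\delta_1\cdot N$ vanish but $a\ne0$ forces the surviving terms to be nonzero by the nondegeneracy built into the construction of $T$ and $B$ — yields the contradiction. The conclusion that $\overline{\mathcal D}\cap T\ne\emptyset$ is then immediate since a nonzero effective class restricted to $T$ cannot be a multiple of $S$ unless it actually meets $T$; and the intersection contains points off $S$ because $d|_T$ differs from $mS$ precisely in directions transverse to $S$.

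I expect the main obstacle to be the explicit identification of the classes $\delta_1|_T$, $\delta_2|_T$ and $S$ inside $Pic_\Q(T)$ of the relative symmetric product $T=\bigcup_{b\in B}\Gamma_b^{(2)}$, and organizing the intersection-number bookkeeping so that the coefficient $a\ne0$ genuinely survives rather than being cancelled. In particular one must be careful about the two ways the curves of $T$ lie in the boundary (they are in $\Delta_1\cap\Delta_2$, and the limit curves with a $\mathbb P^1$ contribute extra boundary incidences), so the restriction map $Pic_\Q(\overline{\m M}_g)\to Pic_\Q(T)$ must be computed with attention to these multiplicities; the vanishing lemmas are exactly what make this tractable, reducing everything to a finite check that uses only $a\ne0$ and Lemma \ref{lemma:nonzero}.
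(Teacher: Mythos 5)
Your overall strategy (expand $d=a\lambda+\sum a_i\delta_i$, test against curves in $T$ using the vanishing lemmas, and let $a\neq 0$ produce the contradiction) is the right one and is close in spirit to the paper's argument, but as written it contains a false step and relies on test curves that are either unavailable or whose existence is not established. The central error is the claim that Lemma \ref{lemma: lambda} gives $\lambda|_S=0$. That lemma only says $\lambda|_{\Delta_{\Gamma_b}}=0$ for each \emph{fixed} $b$, because along a single diagonal the curve $\Gamma_b$ does not move; on the whole surface $S=\bigcup_{b\in B}\Delta_{\Gamma_b}$ the base point $b$ varies in the complete curve $B\subset\mathcal M_{g-2}$, so $\lambda|_S\neq 0$. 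This is not a minor slip: the nonvanishing of $\lambda$ on $S$ is precisely what forces $a=0$ and yields the contradiction, so asserting $\lambda|_S=0$ destroys the mechanism of the proof. Second, the test curve you propose by letting the $j$-invariant of $E_1$ degenerate to $\infty$ does not lie in $T$: in the construction of $T$ the two elliptic tails are fixed smooth curves, and indeed $\Delta_0\cap T=\emptyset$, so such a curve cannot be used to test an identity in $\Pic_{\Q}(T)$. Third, the existence of a complete curve $N\subset S$ with $S\cdot N=0$ is asserted without proof; the natural curves available in $S$, namely the diagonals $\Delta_{\Gamma_b}$, satisfy $S\cdot\Delta_{\Gamma_b}=4-4(g-2)\neq 0$, and it is exactly this nonvanishing that one wants to exploit, not avoid.

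The correct bookkeeping is a two-step restriction. Since $\Delta_0\cap T=\emptyset$, $\Delta_i\cap T=\emptyset$ for $i\geq 3$, and $\Delta_2\cap T=S$, the hypothesis $d|_T=mS$ reads $a\lambda|_T+a_1\delta_1|_T+a_2kS=mS$ for some $k$. Restricting to a single diagonal $\Delta_{\Gamma_b}$ kills $\lambda$ (Lemma \ref{lemma: lambda}) and $\delta_1$ (Lemma \ref{lemma: delta_1_zero_on_S}), and since $S\cdot\Delta_{\Gamma_b}\neq 0$ this forces $m-a_2k=0$. One is left with $a\lambda|_T+a_1\delta_1|_T=0$; restricting this to $S$ kills $\delta_1$ but not $\lambda$, so $a=0$, contradicting Remark \ref{a_not_zero}. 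Your proposal inverts the roles of the two vanishing statements and never isolates the coefficient $a$ cleanly, so as it stands it does not close.
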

\begin{proof}
We use the notation introduced in (\ref{expansion_d}).
 By contradiction, assume that $d|_T = m S$. Notice that $\Delta_i $ does not intersect $T$ for $i=0$ and $i\ge 3$ and that $\Delta_{2}\cap T=S$, so we get:
\[
 d|_T=m S=a\lambda |_T + a_1 \delta _{1}|_T+a_2 k S,
\]
for some $k$. Therefore
\[
(m-a_2 k) S= a \lambda |_T +a_1 \delta _{1}|_T.
\]
Restricting to one diagonal $\Delta_{\Gamma_b}$ and using Lemmas \ref{lemma: delta_1_zero_on_S} and \ref{lemma: lambda} we deduce that $m-a_2 k=0$. Restricting now to 
$S$ we get $a \lambda |T=0$. Since $\lambda $ is not trivial on $T$ we obtain that $a=0$ which contradicts  Remark (\ref{a_not_zero}).
\end{proof}

\begin{rem}\label{set_of_limits}
Observe that the isomorphism classes of $E_1$ and $E_2$ are arbitrary, hence they could represent the $\infty$ class. Then the limit curves we were looking for are:
\[
 \mathcal L=\{E_{1}\cup_{p_1} \Gamma \, \cup_{p_2} E_{2} \in \overline {\mathcal D} \,|\, \Gamma \in \mathcal M_{g-2},\, p_1\ne p_2, E_1,E_2\in \overline {\mathcal M_1} \}
\subset \overline {\mathcal D}.
\]
 
\end{rem}

We recall that a generic point of $M_{g},\, g\ge 3$ is contained in a complete curve (see Remark (\ref{a_not_zero})). Then a consequence of the proposition (\ref{prop:limits}) is
\begin{cor}\label{cor:limits}
 There is a subvariety $\mathcal R \subset \mathcal M_{g-2}$ of codimension at most $1$  
such that for any $\Gamma \in \mathcal R$ and for any $E_1,E_2\in \overline 
 {\mathcal M_1} $, there are curves  $E_{1}\cup_{p_1} \Gamma \, \cup_{p_2} E_{2}$ in $\mathcal L$.
\end{cor}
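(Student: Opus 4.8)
The plan is to deduce Corollary \ref{cor:limits} from Proposition \ref{prop:limits} by a dimension count on the threefold $T$, using the explicit description of $T$ as the relative symmetric product $\bigcup_{b\in B}\Gamma_b^{(2)}$ and of the family $\mathcal L$ from Remark \ref{set_of_limits}. First I would recall that by Proposition \ref{prop:limits} the intersection $\overline{\mathcal D}\cap T$ is a nonempty divisor in $T$ which is not supported on $S$; hence it has a component $V$ of dimension $2$ with $V\not\subset S$. Projecting $V$ via the structure map $T\to B$ (the composition $T=\bigcup_b \Gamma_b^{(2)}\to B$), either $V$ dominates $B$ or $V$ lies in a single fiber $\Gamma_{b_0}^{(2)}$.

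The key step is to show that one can arrange $\Gamma$ to vary in a codimension $\le 1$ locus of $\mathcal M_{g-2}$. Here I would use that $B$ was chosen as an \emph{arbitrary} complete integral curve in $\mathcal M_{g-2}$: as $B$ varies over all such curves, the corresponding threefolds $T=T_B$ sweep out (the image in $\overline{\mathcal M}_g$ of) the full relative symmetric product over $\mathcal M_{g-2}$, a variety of dimension $(3g-9)+2 = 3g-7$, and Proposition \ref{prop:limits} applies to every $T_B$. For a general such $B$, the divisorial component $V_B\subset \overline{\mathcal D}\cap T_B$ must dominate $B$ (an exceptional vertical component could only occur for special $B$, and since a general point of $\mathcal M_{g-2}$ lies on a complete curve $B$ by Remark \ref{a_not_zero}, we may choose $B$ passing through a general point in general position). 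Then the union $\bigcup_B \pi_B(V_B)\subset \mathcal M_{g-2}$, where $\pi_B\colon T_B\to \mathcal M_{g-2}$ records the genus $g-2$ component, contains a subvariety $\mathcal R$ with $\operatorname{codim}_{\mathcal M_{g-2}}\mathcal R\le 1$: indeed the locus in the total relative symmetric product over $\mathcal M_{g-2}$ parametrizing curves in $\overline{\mathcal D}$ has codimension $\le 1$ there (as $\overline{\mathcal D}$ is a divisor and restricts nontrivially by Proposition \ref{prop:limits}), and its image under the dominant map to $\mathcal M_{g-2}$ therefore has codimension $\le 1$.

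Having produced $\mathcal R$, the final point is that the isomorphism classes of $E_1,E_2$ can be taken arbitrary in $\overline{\mathcal M_1}$, including the cuspidal class $j=\infty$. This is built into the construction: in the definition of $T$ the elliptic curves $E_1,E_2$ were fixed with \emph{arbitrary} $j$-invariants $j_1,j_2\in \overline{\mathcal M_1}$, and Lemma \ref{lemma: delta_1_zero_on_S}, Lemma \ref{lemma: lambda} and Proposition \ref{prop:limits} hold regardless of that choice. So for each fixed pair $(E_1,E_2)$ we get a locus $\mathcal R_{E_1,E_2}$ of codimension $\le 1$ in $\mathcal M_{g-2}$; since the construction is algebraic in $(j_1,j_2)$, a Baire/generic-constancy argument over the irreducible base $\overline{\mathcal M_1}\times\overline{\mathcal M_1}$ lets us extract a single $\mathcal R$ of codimension $\le 1$ that works for all $(E_1,E_2)$, which is exactly the statement: for every $\Gamma\in\mathcal R$ and every $E_1,E_2\in\overline{\mathcal M_1}$ the curve $E_1\cup_{p_1}\Gamma\cup_{p_2}E_2$ lies in $\mathcal L$ for suitable $p_1\ne p_2$.

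The main obstacle I anticipate is the bookkeeping in the dimension count: one must be careful that the divisorial intersection $\overline{\mathcal D}\cap T$ genuinely contributes a component dominating $B$ (rather than hiding in $S$ or in a fiber) and that, after letting $B$ and $(E_1,E_2)$ vary, the images in $\mathcal M_{g-2}$ assemble into one honest subvariety $\mathcal R$ of codimension $\le 1$ rather than a countable union of smaller ones. The cleanest way around this is to work from the start on the total space $\bigcup_{\Gamma\in\mathcal M_{g-2}}\Gamma^{(2)}\times\overline{\mathcal M_1}\times\overline{\mathcal M_1}$ mapping to $\overline{\mathcal M}_g$, pull back $\overline{\mathcal D}$, note the pullback is a nonzero divisor by Proposition \ref{prop:limits}, and take the image of a dominating component under projection to $\mathcal M_{g-2}$; irreducibility of the total space then immediately yields a single $\mathcal R$.
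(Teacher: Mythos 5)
Your overall route is the same as the paper's: the paper's entire proof of this corollary is the one--line observation that a generic point of $\mathcal M_{g-2}$ lies on a complete curve $B$ (Remark \ref{a_not_zero}), so that Proposition \ref{prop:limits} applies to every such $T_B$, together with an implicit dimension count; your fleshing-out of that count is correct. For a \emph{fixed} pair $(E_1,E_2)$ your argument is fine: the pullback of $\overline{\mathcal D}$ to the relative symmetric product over $\mathcal M_{g-2}$ is a (nonempty, by Proposition \ref{prop:limits}) divisor not equal to the relative diagonal, its components have dimension $3g-8$, and since the fibers of the projection to $\mathcal M_{g-2}$ are $2$-dimensional the image has codimension at most $1$. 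One small correction: a ``vertical'' divisorial component of $\overline{\mathcal D}\cap T_B$ is necessarily an entire fiber $\Gamma_{b_0}^{(2)}$, which is not an obstruction at all --- it contributes $\Gamma_{b_0}$ to $\mathcal R$ with every $p_1+p_2$ --- so you do not need to argue that $V_B$ dominates $B$ for general $B$.

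The genuine gap is in the uniformity over $(E_1,E_2)$, which is exactly the content that makes the corollary usable later (in Section 6 one needs, for a \emph{fixed} $\Gamma\in\mathcal R$, both $E_\infty\cup\Gamma\cup E_2$ and $E_1\cup\Gamma\cup E_\infty$ to lie in $\overline{\mathcal D}$). Your ``Baire/generic-constancy'' step does not work as stated: an algebraic family $\{\mathcal R_{E_1,E_2}\}$ of codimension-$\le 1$ subvarieties of $\mathcal M_{g-2}$ parametrized by the irreducible base $\overline{\mathcal M_1}\times\overline{\mathcal M_1}$ need not have a common codimension-$\le 1$ component (think of a pencil of hypersurfaces, whose base locus has codimension $2$). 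Your proposed fix via the total space $\bigcup_\Gamma\Gamma^{(2)}\times\overline{\mathcal M_1}\times\overline{\mathcal M_1}$ has the same defect: it shows that the image of the pulled-back divisor in $\mathcal M_{g-2}\times\overline{\mathcal M_1}\times\overline{\mathcal M_1}$ has codimension at most $1$, but not that this image contains a product $\mathcal R\times\overline{\mathcal M_1}\times\overline{\mathcal M_1}$, which is what the statement asserts. What one can extract cheaply (semicontinuity of fiber dimension applied to the irreducible components of that image, plus irreducibility of $\overline{\mathcal M_1}\times\overline{\mathcal M_1}$) is a single component surjecting onto $\overline{\mathcal M_1}\times\overline{\mathcal M_1}$ with all fibers of codimension $\le 1$ in $\mathcal M_{g-2}$; promoting this to a fixed $\mathcal R$ independent of $(E_1,E_2)$ requires an additional argument. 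To be fair, the paper itself offers no justification for this point either --- its proof of the corollary is a single sentence --- so you have faithfully reproduced, and honestly flagged, the step that the paper leaves implicit; but as written your proposal does not close it.
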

Observe that all these elements belong to $\Delta_1$ and they belong to $\Delta_0$ if and only if at least one of the elliptic curves represents the infinity class.

\begin{rem}
With the same techniques one can prove that the divisor $\overline{\mathcal{D}}$ contains irreducible curves with only one node.
Moreover, by considering a surface in $\Delta_i$, $i>0$ given by fixed smooth curves of genus $g-i$ and $i$ and moving the intersection point, one also shows with the same procedure that the 
divisor contain ``generic'' elements of $\Delta_i$.  We do not use these facts in the rest of the paper.
\end{rem}

\vskip 3mm
\section{Limits of isogenies}
As in section 2 we assume the existence of isogenies $JC'\lra JC$ for generic elements $C\in \cD$. In order to glue all these maps together to provide a family $\chi: 
\m{J}' \lra \m{J}$ we need to pull-back to a suitable branched cover of our divisor. Since our calculations will be mainly of local nature we still denote this space of 
parameters as $\cD$. 
Our goal is to get as much information as possible from the specialization of the family of isogenies to the curves of $\mathcal L$.  

Let $L=E_{1}\cup_{p_1} \Gamma \, \cup_{p_2} E_{2} \in \mathcal L \subset \overline {\mathcal D}$ a fixed limit curve (see Remark (\ref{set_of_limits}) and Corollary 
(\ref{cor:limits})). We assume that one of the elliptic curves has a node. 

\begin{tikzpicture}[line cap=round,line join=round,>=triangle 45,x=1.0cm,y=1.0cm]
\clip(-3.15,-3.48) rectangle (7.75,3.78);
\draw [shift={(1.31,2.97)}] plot[domain=4.23:5.68,variable=\t]({1*2.4*cos(\t r)+0*2.4*sin(\t r)},{0*2.4*cos(\t r)+1*2.4*sin(\t r)});
\draw [shift={(2.69,1.9)}] plot[domain=-0.46:4.09,variable=\t]({1*0.67*cos(\t r)+0*0.67*sin(\t r)},{0*0.67*cos(\t r)+1*0.67*sin(\t r)});
\draw [shift={(3.97,3.26)}] plot[domain=3.99:5.2,variable=\t]({1*2.53*cos(\t r)+0*2.53*sin(\t r)},{0*2.53*cos(\t r)+1*2.53*sin(\t r)});
\draw [shift={(10.19,1.38)}] plot[domain=2.98:3.62,variable=\t]({1*9.28*cos(\t r)+0*9.28*sin(\t r)},{0*9.28*cos(\t r)+1*9.28*sin(\t r)});
\draw [shift={(2.53,4.05)}] plot[domain=4.36:5.27,variable=\t]({1*5.8*cos(\t r)+0*5.8*sin(\t r)},{0*5.8*cos(\t r)+1*5.8*sin(\t r)});
\draw (4.79,1.98) node[anchor=north west] {$ E_{\infty} $};
\draw (0.28,-0.34) node[anchor=north west] {$ \Gamma $};
\draw (5.16,-1.06) node[anchor=north west] {$ E_2 $};
\end{tikzpicture}

Observe that $L\in \Delta_0 \cap \Delta_1$. Let $\overline {\mathcal D_0}$ be a component of $\overline{\mathcal D} \cap \Delta_0$ containing $L$.  Since we can assume that 
the generic element of this component does not belong to $\Delta_i$, $i\ge 2$, we obtain the following four cases:
\begin{enumerate}
 \item[a)] The generic element of $\overline {\mathcal D_0}$ is an irreducible curve with only one node.
 \item[b)] The generic element of $\overline {\mathcal D_0}$ consists of an irreducible curve with only one node with an elliptic curve attached in a smooth point.
 \item[c)] The generic element of $\overline {\mathcal D_0}$ consists of a smooth irreducible curve with an elliptic nodal curve attached in a smooth point.
 \item[d)] The generic element of $\overline {\mathcal D_0}$ consists of two irreducible smooth curves glued in two different points.
 \item[e)] The generic element of $\overline {\mathcal D_0}$ is an irreducible curve with two nodes.
\end{enumerate}

For topological reasons the cases d) and e) can not occur. Indeed, the number of nodal points in a stable curve such that the curve remains connected when we remove the 
point can not decrease under specialization. Since there are two such a points in a generic point of type d) and e) and only one in our limit curve $L$ we can ignore these cases.

In this section we will specialize the isogeny $\chi$ to a generic curve of the component $\overline {\mathcal D_0}$ that contains our limit curve $L$.

\begin{rem}
We will use several times that the limit of a family of isogenies of Jacobians is also an isogeny, that is, an \'etale surjective map between the generalized Jacobians. We briefly give an argument to justify this. Consider two semistable flat families of curves parametrized by a disk  $\pi ':\mathcal C'\lra \mathbb D$ and  $\pi: \mathcal C\lra \mathbb D$ such that the curves are smooth away from the central fibres $\mathcal C'_{0}$, $\mathcal C_{0}$. Given a family of isogenies $\varphi: \mathcal {JC}' \lra \mathcal {JC}$ over the punctured disk, the existence of a map $\varphi_0:\mathcal {JC}'_0 \lra  \mathcal {JC}_0$ of semiabelian varieties is given in \cite{BarPir}, p. 267. We want to prove that this map is also an isogeny. Recall that the family of Jacobians is given by
\[
 \mathcal {JC'}=R^1\pi_* \mathcal O / R^1\pi_* \mathbb Z
\]
and analogously for $\mathcal {JC}$. The fact that $\varphi_t$ is an isogeny for $t\ne 0$ implies that there is a sequence over the punctured disk
\[
0\lra R^1 \pi'_{\ast}\mathbb Z \lra R^1 \pi_{\ast} \mathbb Z \lra \tau \lra 0,
\]
where $\tau $ is a torsion local system. Hence tensoring with $\mathbb Q$ we obtain an isomorphism 
 \[ 
 R^1 \pi_*' \mathbb Q \cong R^1\pi_* \mathbb Q. 
\]
Since the rational cohomology of the central fibre can be identified with the invariant subspace by the action of the monodromy we have that:
\[
 H^1(\mathcal C_0',\mathbb Q)\cong H^1 (\mathcal C'_{|\mathbb D^*},\mathbb Q)^\text{inv}\cong H^0(\mathcal C'_{|\mathbb D^*}, R^1 \pi_*' \mathbb Q),
\]
and therefore we conclude that $H^1(\mathcal C_0',\mathbb Q)\cong H^1(\mathcal C_0,\mathbb Q)$. So the map $\varphi_0$ is an isogeny between the generalized Jacobians.
\end{rem}

When we go to the limit the information we obtain is different according to the three cases explained above.

{\bf Case a).}
We consider the normalization map $p:\Delta_0 \dashrightarrow \m{M}_{g-1}$ restricted to $\overline{\cD_0}$.  
There are two possibilities according to the dimension of the generic fiber of $p_0:=p|_{\overline {\cD_0}}$.

\vskip 3mm
{\bf Case a.1):} Assume that $p_0$ is dominant, therefore the generic fibre has dimension $dim \overline{\cD_0}-\dim \m{M}_{g-1}=3g-5-3g+6=1$. For a generic element 
$t_0\in \overline{\cD_0}$ the limit map 
$\chi_{t_0}:JC'_{t_0}\lra JC_{t_0}$ gives a diagram of extensions:
\[
 \xymatrix@C=1.pc@R=1.8pc{
0 \ar[r]     &\mathbb C^{*\, r}\ar[r] \ar[d]^{\gamma}   & JC'_{t_0} \ar[r] \ar[d]^{\chi_{t_0}}  & J\tilde {C'}_{t_0} \ar[r] \ar[d]^{\tilde \chi _{t_0} }  & 0 \\
0 \ar[r]     &\mathbb C^*     \ar[r]           & JC_{t_0}  \ar[r]         & J\tilde {C}_{t_0} \ar[r]           & 0
}
\]
where $\tilde C'_{t_0}$ and $\tilde C_{t_0}$ stand for the normalizations of $C'_{t_0}$ and $C_{t_0}$ respetively. Since $\chi_{t_0}$ has finite kernel,
$r$ must be one 
and $\gamma (z)=z^m$ for some non-zero integer $m$. Since $g-1\ge 4$ we can apply the main result in \cite{BarPir} and we get that $\tilde C'_{t_0}=\tilde C_{t_0}$ and the 
isogeny $\tilde \chi _{t_0}$
is $n$ times the identity. Assume that $C_{t_0}$ (resp. $C'_{t_0}$) is obtained from $\tilde C_{t_0}$ by pinching two distinct points $p,q$ (resp. $p',q'$).  
As in \cite{BarPir}, section 2, to compare the extension classes of each horizontal short exact sequence, we decompose the last diagram into
\[
 \xymatrix@C=1.pc@R=1.8pc{
0 \ar[r]     &\mathbb C^* \ar[r] \ar[d]^{\gamma}   & JC'_{t_0} \ar[r] \ar[d]^{\chi_{t_0}}  & J\tilde {C}_{t_0} \ar[r] \ar@{=}[d]   & 0 \\
0 \ar[r]     &\mathbb C^*  \ar[r]   \ar@{=}[d]  &E \ar[r] \ar[d] &  J\tilde {C}_{t_0}\ar[d]^{\tilde \chi _{t_0} } \ar[r] &0 \\
0 \ar[r]     &\mathbb C^*     \ar[r]           & JC_{t_0}  \ar[r]         & J\tilde {C}_{t_0} \ar[r]           & 0.
}
\]

We identify (up to sign) the extension class 
\[
 [JC'_{t_0}]\in Ext(J\tilde {C}_{t_0},\mathbb C^*)\cong Pic^0(J\tilde {C}_{t_0})\cong J\tilde {C}_{t_0}
 \]
 with $p'-q'$
and analogously $[JC_{t_0}]$ with $p-q$. Then the equality $[E]=\gamma _*([JC'_{t_0}])=\tilde \chi _{t_0} ^*([JC_{t_0}])$ provides
the following  relation in $J\tilde C_{t_0}$:
\[
 n(p-q)=\pm \, m(p'-q').
\]
 Hence we can assume that $np+mq'=mp'+nq$  in $Pic(\tilde C_{t_0})$. We assume also that the points are different. Since the dimension of the generic fibre of $p_{0}$ is 
 $1$ we have a one dimensional family of maps $\tilde C_{t_0}\lra \mathbb P^1$ of degree $n+m$ with two  fibers as above. The Riemann-Hurwitz Theorem implies that
\[
 2g(\tilde C_{t_0})-2= 2g-4=(n+m)(2g(\mathbb P^1)-2)+ 2(n-1)+2(m-1)+r=-4+r,
\]
 so the number $r$ of the ramification points out of the special fibers $np+mq'$ and $mp'+nq$ is $r=2g$. Then the Hurwitz scheme of maps of degree $n+m$ into $\mathbb P^1$ 
 with $r+2=2g+2$ discriminant points must cover $\m{M}_{g-1}$ with generic fibers of dimension $1$. Comparing dimensions:
\[
2g+2-\dim \text{Aut}(\mathbb P^1)-\dim \text {generic fiber}=2g+2-4=2g-2 \ge \dim \m{M}_{g-1}=3g-6, 
\]
 which contradicts the hypothesis $g\ge 5$. Hence we get that the extension is the same and $n=m$. 

\vskip 3mm
{\bf Case a.2):} Assume that the generic fiber of $p_{0}$ has dimension $2$. 
As before we get a diagram:
\[
 \xymatrix@C=1.pc@R=1.8pc{
0 \ar[r]     &\mathbb C^*     \ar[r] \ar[d]^{m}  & JC'_{t_0} \ar[r] \ar[d]^{\chi_{t_0}}  & J\tilde {C'}_{t_0} \ar[r] \ar[d]^{\tilde \chi _{t_0} }  & 0 \\
0 \ar[r]     &\mathbb C^*     \ar[r]                  & JC_{t_0}  \ar[r]         & J\tilde {C}_{t_0} \ar[r]           & 0
}
\]
but now we do not have the genericity of $\tilde C_{t_0}$ so we can not directly apply the main result in \cite{BarPir}. The relation between extension classes is in this case
\[
 m(p'-q')=\tilde \chi _{t_0}^* (p-q),
\]
in $J\tilde {C'}_{t_0}$. In other words, the isogeny $\tilde \chi _{t_0}$ induces a map between the surfaces $\tilde C_{t_0}-\tilde C_{t_0}$
and $m(\tilde C'_{t_0}-\tilde C'_{t_0})$. 
By using the arguments of the section $3$ in \cite{BarPir} one easily checks that, as before, the curves are the same and the map is a multiplication by an integer. So we have 
proved the following result:

\begin{prop}\label{nodal_limit}
 Let $\chi:\cJ' \lra \cJ$ be a family of isogenies parametrized by $\cD$ and let $\chi_{t_0}:JC'_{t_0}\lra JC_{t_0}$ be a specialization 
to a generic point $t_0$ of a component of the boundary $\overline{\cD}\cap \Delta_0$, where the curve $C_{t_0}$ is an irreducible curve with only one node. Then 
$C'_{t_0}\cong C_{t_0}$ and $\chi_{t_0}$ is the multiplication by a non-zero integer. 
\end{prop}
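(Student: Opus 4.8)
The plan is to extract all the information from the semiabelian (generalized Jacobian) structure of the limit, combined with the geometry of the normalized curves, organizing the argument by the dimension of the fibres of the normalization map.

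First I would set up the extension data. Write $p\colon \Delta_0 \dashrightarrow \m{M}_{g-1}$ for the normalization and $p_0:=p|_{\overline{\cD_0}}$. Since $\dim\overline{\cD_0}=3g-5$ and $\dim\m{M}_{g-1}=3g-6$, the generic fibre of $p_0$ has dimension $1$ if $p_0$ is dominant and dimension $2$ if its image is a divisor, and these are the only cases to treat. For a generic $t_0\in\overline{\cD_0}$ the limit isogeny $\chi_{t_0}\colon JC'_{t_0}\lra JC_{t_0}$ sits in a commutative diagram of semiabelian extensions with torus parts $\mathbb C^{*\,r}$ and $\mathbb C^{*}$, abelian parts $J\tilde C'_{t_0}$ and $J\tilde C_{t_0}$, and induced maps $\gamma$ on the tori and $\tilde\chi_{t_0}$ on the Jacobians of the normalizations; finiteness of $\ker\chi_{t_0}$ forces $r=1$ and $\gamma(z)=z^m$ for a non-zero integer $m$.

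In the dominant case ($1$-dimensional fibres) the curve $\tilde C_{t_0}$ is generic of genus $g-1\ge 4$, so the main result of \cite{BarPir} gives $\tilde C'_{t_0}\cong\tilde C_{t_0}$ and $\tilde\chi_{t_0}$ equal to $n$ times the identity. Identifying the extension classes of $JC'_{t_0}$ and $JC_{t_0}$ in $\Pic^0(J\tilde C_{t_0})\cong J\tilde C_{t_0}$ with the differences $p'-q'$ and $p-q$ of the pinched pairs on the common normalization, the relation $\gamma_*[JC'_{t_0}]=\tilde\chi_{t_0}^*[JC_{t_0}]$ gives $n(p-q)=\pm m(p'-q')$, i.e. $np+mq'=mp'+nq$ in $\Pic(\tilde C_{t_0})$. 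If the four points were distinct, letting $t_0$ vary in the fibre of $p_0$ would yield a positive-dimensional family of degree-$(n+m)$ maps $\tilde C_{t_0}\to\mathbb P^1$ with two prescribed fibres, and Riemann--Hurwitz gives $2g$ further branch points; the associated Hurwitz space, whose image in $\m{M}_{g-1}$ has dimension at most $2g-2$, would then have to dominate $\m{M}_{g-1}$, contradicting $g\ge 5$. Hence $\{p,q\}=\{p',q'\}$, so $n=m$, $C'_{t_0}\cong C_{t_0}$, and $\chi_{t_0}$ is multiplication by $n$.

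The remaining case ($2$-dimensional fibres) is the main obstacle: $\tilde C_{t_0}$ is now only generic in a divisor of $\m{M}_{g-1}$, so \cite{BarPir} cannot be applied directly, and since the statement fails in genus $4$ one cannot sidestep this by induction when $g=5$. Instead I would rerun the infinitesimal / difference-surface argument of Section $3$ of \cite{BarPir}. Here the extension classes satisfy $m(p'-q')=\tilde\chi_{t_0}^*(p-q)$ in $J\tilde C'_{t_0}$, so $\tilde\chi_{t_0}$ carries the difference surface $\tilde C_{t_0}-\tilde C_{t_0}$ into $m(\tilde C'_{t_0}-\tilde C'_{t_0})$; the rigidity of such difference surfaces forces $\tilde C_{t_0}\cong\tilde C'_{t_0}$ and $\tilde\chi_{t_0}$ to be a multiple of the identity, and matching the two pinched points then gives $C'_{t_0}\cong C_{t_0}$ with $\chi_{t_0}$ a multiplication, completing the proof.
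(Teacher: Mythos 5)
Your proposal follows the paper's own proof essentially step for step: the same split according to whether the normalization map $p_0\colon\overline{\cD_0}\dashrightarrow\m{M}_{g-1}$ is dominant (fibre dimension $1$) or not (fibre dimension $2$), the same extension-class comparison $n(p-q)=\pm m(p'-q')$ with the Hurwitz-space dimension count $2g-2<3g-6$ ruling out distinct points in the dominant case, and the same appeal to the difference-surface argument of Section 3 of \cite{BarPir} in the non-dominant case. No substantive differences to report.
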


{\bf Case b):}
We assume now that the limit curve $L$ belongs to an irreducible component $\overline{\cD_0}$ of $\overline {\cD}\cap \Delta _0$ whose generic element consists in a smooth 
curve $\Gamma$ of genus $g-1$ and a nodal elliptic curve $E_{\infty}$ (i.e. a $\mathbb P^1$ with the points $0$ and $1$ identified) glued to $\Gamma $ in a point $p\in 
\Gamma$. We denote as above by $C_{t_0}=\Gamma \cup _p \,E_{\infty}$ a generic curve in $\overline{\cD_0}$. Observe that the natural map $\overline{\cD_0}\dasharrow 
\mathcal M_{g-1}$ must be dominant by a count of dimensions (the fibre has dimension at most $1$), therefore we can assume that $\Gamma $ is generic in $\mathcal M_{g-1}$.

We consider the specialization
of the family of isogenies to our curve $\chi_{t_0}:JC'_{t_0}\lra JC_{t_0}=J\Gamma \times \mathbb C^{*}$ which fits in a diagram of 
extensions:
\[
 \xymatrix@C=1.pc@R=1.8pc{
0 \ar[r]     &\mathbb C^{*\, r}\ar[r] \ar[d]^{\gamma}   & JC'_{t_0} \ar[r] \ar[d]^{\chi_{t_0}}  & J\tilde {C'}_{t_0} \ar[r] \ar[d]^{\tilde \chi _{t_0} }  & 0 \\
0 \ar[r]     &\mathbb C^*     \ar[r]           & J\Gamma \times \mathbb C^{*}  \ar[r]         & J\Gamma \ar[r]           & 0
}
\]
where  $\tilde C'_{t_0}$ stands for the normalization of $C'_{t_0}$. Since $\chi_{t_0}$ has finite kernel,
$r$ must be one and $\gamma (z)=z^m$ for some non-zero integer 
$m$. Since $g-1\ge 4$ we can apply the main result in \cite{BarPir} and we get that $\tilde C'_{t_0}=\Gamma$ and the isogeny $\tilde \chi _{t_0}$
is $n$ times the identity. So the diagram above becomes:
\[
 \xymatrix@C=1.pc@R=1.8pc{
0 \ar[r]     &\mathbb C^{*}\ar[r] \ar[d]^{m}   & JC'_{t_0} \ar[r] \ar[d]^{\chi_{t_0}}  & J\Gamma \ar[r] \ar[d]^{n }  & 0 \\
0 \ar[r]     &\mathbb C^*     \ar[r]           & J\Gamma \times \mathbb C^{*}  \ar[r]         & J\Gamma \ar[r]           & 0
}
\]
Since the extension class of the first row corresponds to a generalized Jacobian, there exist points $q_1,q_2 \in \Gamma$ such that this class corresponds (up to sign) 
to $q_1-q_2 \in J\Gamma$. Therefore, since the class of the second row is zero we get that $m(q_1-q_2)=0$.

Moving the point $p$ in $\Gamma $ we have a positive dimensional family of pairs of points $(q_1,q_2)\in C\times C$ with this property. This family has to be the diagonal since the map $\Gamma \times \Gamma \lra J\Gamma $ has degree $1$ and finite fibres out of the diagonal. Hence  we obtain $q_1=q_2$.

Hence the extension given by the first row is also trivial: $JC'_{t_0} \cong J\Gamma \times \mathbb 
C^{*}$ and $\chi_{t_0}=\begin{pmatrix}n&0 \\ 0 &m \end{pmatrix}$. 
We get the following result:

\begin{prop}\label{limit_b}
 Let $\chi:\cJ' \lra \cJ$ be a family of isogenies parametrized by $\cD$ and let $\chi_{t_0}:JC'_{t_0}\lra JC_{t_0}$ be a specialization 
to a generic point $t_0$ of a component of the boundary $\overline{\cD}\cap \Delta_0$, where the curve $C_{t_0}$ is a reducible curve consisting in a smooth curve of 
genus $g-1$ with a nodal elliptic curve $E_{\infty}$ attached in one point. Then $C'_{t_0}\cong C_{t_0}$ and the isogeny in the compact part is the 
multiplication by a non zero integer. 
\end{prop}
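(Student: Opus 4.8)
The plan is to specialize the family of isogenies to the given boundary curve $C_{t_0}=\Gamma\cup_p E_{\infty}$, where $E_{\infty}$ is the nodal rational curve, and to read off the structure of $\chi_{t_0}$ from the comparison of the two generalized Jacobians as semiabelian varieties. Since the limit of a family of isogenies of Jacobians is again an isogeny between generalized Jacobians, as recalled above, and since $JC_{t_0}=J\Gamma\times\bC^{*}$ has toric part of dimension one, the map $\chi_{t_0}$ fits into a morphism of canonical extensions
\[
\xymatrix@C=1.pc@R=1.8pc{
0 \ar[r] & \bC^{*\, r}\ar[r]\ar[d]^{\gamma} & JC'_{t_0}\ar[r]\ar[d]^{\chi_{t_0}} & J\tilde C'_{t_0}\ar[r]\ar[d]^{\tilde\chi_{t_0}} & 0\\
0 \ar[r] & \bC^{*}\ar[r] & J\Gamma\times\bC^{*}\ar[r] & J\Gamma\ar[r] & 0,
}
\]
with $\tilde C'_{t_0}$ the normalization of $C'_{t_0}$. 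Finiteness of $\ker\chi_{t_0}$ forces the toric rank $r$ to equal $1$ and $\gamma(z)=z^{m}$ for a nonzero integer $m$; a dimension count shows that the rational map $\overline{\cD_0}\dashrightarrow\m{M}_{g-1}$ is dominant with one-dimensional fibres (the free parameter being the attaching point $p$), so $\Gamma$ is generic in $\m{M}_{g-1}$. As $g-1\ge 4$, one checks that $C'_{t_0}$ is an irreducible one-nodal curve and applies the main theorem of \cite{BarPir} to the isogeny $\tilde\chi_{t_0}$ to obtain $\tilde C'_{t_0}\cong\Gamma$ and $\tilde\chi_{t_0}=n\cdot\mathrm{id}$ for some nonzero integer $n$, so that the diagram becomes
\[
\xymatrix@C=1.pc@R=1.8pc{
0 \ar[r] & \bC^{*}\ar[r]\ar[d]^{m} & JC'_{t_0}\ar[r]\ar[d]^{\chi_{t_0}} & J\Gamma\ar[r]\ar[d]^{n} & 0\\
0 \ar[r] & \bC^{*}\ar[r] & J\Gamma\times\bC^{*}\ar[r] & J\Gamma\ar[r] & 0.
}
\]

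The heart of the argument is the comparison of the extension classes of the two rows. Writing $C'_{t_0}$ as the curve obtained from $\Gamma$ by identifying two points $q_1,q_2$, the class of the top row in $Ext^{1}(J\Gamma,\bC^{*})\cong\Pic^{0}(\Gamma)\cong J\Gamma$ is, up to sign, $q_1-q_2$, whereas the bottom row is the trivial extension. Equating the push-forward of the top class under $\gamma$ with the pull-back of the bottom class under $\tilde\chi_{t_0}$ yields
\[
m(q_1-q_2)=0 \quad\text{in } J\Gamma .
\]
To upgrade this to $q_1=q_2$ I would fix $\Gamma$ generic and let the attaching point $p$ vary over its one-dimensional fibre in $\overline{\cD_0}$: this produces a positive-dimensional family of pairs $(q_1,q_2)\in\Gamma\times\Gamma$ all with $m(q_1-q_2)=0$. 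Since $m\ne 0$, the difference $q_1-q_2$ then takes values in the finite group $J\Gamma[m]$ and is hence constant along the family; and since the difference morphism $\Gamma\times\Gamma\to J\Gamma$ has the diagonal as its only positive-dimensional fibre (for the non-hyperelliptic curve $\Gamma$ its fibres off the diagonal are single points), the family must be the diagonal, so $q_1=q_2$. Therefore the top extension is trivial, $JC'_{t_0}\cong J\Gamma\times\bC^{*}$, the curve $C'_{t_0}$ is of the form $\Gamma\cup_q E_{\infty}$ and hence isomorphic to $C_{t_0}$, and $\chi_{t_0}=\begin{pmatrix}n&0\\ 0&m\end{pmatrix}$, whose restriction to the compact part is multiplication by the nonzero integer $n$.

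The step I expect to require the most care is the structural bookkeeping behind the first paragraph rather than the extension-class computation: one must check that the map of semiabelian varieties produced by the limit of isogenies is itself an isogeny (handled above, via the identification of the rational cohomology of the central fibre with the monodromy-invariant part), that the abelian part of $JC'_{t_0}$ is the Jacobian of a smooth genus $g-1$ curve so that \cite{BarPir} is applicable, and that no reducible degeneration of $C'_{t_0}$ intervenes --- the last point following because $J\Gamma$ is simple for generic $\Gamma$. Once $r=1$ and $\tilde\chi_{t_0}=n\cdot\mathrm{id}$ are in hand, the remainder is the routine extension-class manipulation together with the ``vary the node'' degeneration argument above.
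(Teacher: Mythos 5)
Your proposal is correct and follows essentially the same route as the paper: the same extension diagram with $r=1$ and $\gamma(z)=z^m$, the same appeal to the genericity of $\Gamma$ and the main theorem of \cite{BarPir} to get $\tilde\chi_{t_0}=n\cdot\mathrm{id}$, and the same comparison of extension classes yielding $m(q_1-q_2)=0$, upgraded to $q_1=q_2$ by varying the attaching point $p$ and using that the difference map $\Gamma\times\Gamma\to J\Gamma$ has finite fibres off the diagonal. Your remark that $q_1-q_2$ lies in the finite group $J\Gamma[m]$ is just a slightly more explicit phrasing of the paper's argument, not a different one.
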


{\bf Case c):}
Finally we assume that the limit curve $L$ belongs to an irreducible component $\overline{\cD_0}$ of $\overline {\cD}\cap \Delta _0$ whose generic element consists in a 
nodal curve $\Gamma_0$ of genus $g-1$ and an elliptic curve $E$ glued to $\Gamma _0$ in a smooth point $p\in \Gamma_0$. We denote as above by $C_{t_0}=\Gamma _0\cup _p 
\,E$ a generic curve in $\overline{\cD_0}$. 
Observe that the natural map $\overline{\cD_0}\dasharrow \Delta_0(\mathcal M_{g-1})$  must be dominant (here $\Delta_0(\mathcal M_{g-1})$ denotes the $\Delta_0$ divisor in the moduli space $\mathcal M_{g-1}$). Indeed the generic fibre of this map has dimension at most $2$ 
parametrized by the smooth point $p$ in $\Gamma_0$ and the moduli of the elliptic curves. Since $\dim \overline{\cD_0}=3g-5$ and $\dim  \Delta_0(\mathcal M_{g-1})=3g-7$, 
the dominance follows. Therefore we can assume that $\Gamma _0$ is generic in $\Delta_0(\mathcal M_{g-1})$, and also its normalization $\tilde \Gamma _0$ is generic in 
$\mathcal M_{g-2}$. Moreover all the curves $\Gamma _0\cup _p \,E$ are contained in $\overline{\cD_0}$ for a generic $\Gamma _0$.

As in the other cases we consider the specialization 
of the family of isogenies to our curve $\chi_{t_0}:JC'_{t_0}\lra JC_{t_0}=J\Gamma _0 \times E$ which fits in a 
diagram of extensions:
 \[
  \xymatrix@C=1.pc@R=1.8pc{
 0 \ar[r]     &\mathbb C^{*\, r}\ar[r] \ar[d]^{\gamma}   & JC'_{t_0} \ar[r] \ar[d]^{\chi_{t_0}}  & J\tilde {C}'_{t_0} \ar[r] \ar[d]^{\tilde \chi _{t_0} }  & 0 \\
 0 \ar[r]     &\mathbb C^*     \ar[r]           & J\Gamma _0\times E \ar[r]         & J\tilde{\Gamma}_0\times E \ar[r]           & 0
}
 \]
 where  $\tilde C'_{t_0}$ stands for the normalization of $C'_{t_0}$. Since $\chi_{t_0}$ has finite kernel, $r$ must be one and $\gamma (z)=z^m$ for some non-zero integer 
 $m$. 
 
 We claim that $J\tilde {C'}_{t_0}$ must be a product of Jacobians (in other words, the smooth curve $\tilde C'_{t_0}$ is reducible). We prove this by contradiction, assume 
 that $\tilde C'_{t_0}$ is irreducible and compare the extension classes. This gives (up to sign) a relation $m (p'-q')=\tilde \chi_{t_0}(p-q)$ in $J\tilde 
 {C'}_{t_0}$. Moving the points $p$ and $q$ in the fixed  curve $\tilde \Gamma _0$ we get that the image of $\tilde \Gamma _0 - \tilde \Gamma _0$ by the isogeny is the 
 surface $\tilde {C'}_{t_0}-\tilde {C'}_{t_0}$ which is impossible since it is contained in the proper abelian subvariety $ \tilde \chi_{t_0}^*(J\tilde \Gamma _0)$.
 
 By the genericity of $\tilde \Gamma _0$ we can assume that $J\tilde \Gamma _0$ is simple and then  $J\tilde C'_{t_0}\cong  J\tilde C''_{t_0}\times E'$, where $\tilde 
 C''_{t_0}$ is an irreducible curve of genus $g-2$ and $E'$ stands for a smooth elliptic curve. So $C'_{t_0}$ is a nodal curve $C''_{t_0}$ with the elliptic curve attached 
 in a smooth point. The extension class is the difference of two points $p'', q''\in \tilde C''_{t_0}$.
 The diagram above becomes:
 \[
  \xymatrix@C=1.pc@R=1.8pc{
 0 \ar[r]     &\mathbb C^{*}\ar[r] \ar[d]^{m}   & JC''_{t_0}\times E' \ar[r] \ar[d]^{\chi_{t_0}}  & J\tilde C''_{t_0}\times E' \ar[r] \ar[d]^{\tilde \chi_{t_0}=\tilde 
 \chi''_{t_0}\times \varphi } & 0 \\
 0 \ar[r]     &\mathbb C^*     \ar[r]           & J \Gamma _0\times E  \ar[r]         & J\tilde \Gamma _0\times E\ar[r]           & 0,
}
 \]
 where $\varphi: E' \lra E$ is a non-constant map of elliptic curves.
The relation between extension classes is in this case (up to sign):
\[
 m(p''-q'')=(\tilde \chi'' _{t_0})^* (p-q),
\]
in $J\tilde C''_{t_0}$. In other words, the isogeny $\tilde \chi ''_{t_0}$ induces a map between the surfaces $\tilde \Gamma _0-\tilde \Gamma _0$
and $m(\tilde C''_{t_0}-\tilde C''_{t_0})$. 
By using the arguments of Section $3$ in \cite{BarPir} one easily checks the following facts: the curves $C''_{t_0}$ and $\Gamma _0$ are isomorphic, the isogeny $\tilde  
\chi''_{t_0}$ is a non-zero multiple $n$ of the identity, $n=m$ and then $\chi_{t_0}=n\times \varphi $.

\begin{prop}\label{limit_c}
 Let $\chi:\cJ' \lra \cJ$ be a family of isogenies parametrized by $\cD$ and let $\chi_{t_0}:JC'_{t_0}\lra JC_{t_0}$ be a specialization 
to a generic point $t_0$ of a component of the boundary $\overline{\cD}\cap \Delta_0$, where the curve $C_{t_0}$ is a reducible curve consisting in a nodal curve 
$\Gamma _0$ of genus $g-1$ with an elliptic curve $E$ attached in one smooth point, then also  $C'_{t_0}$ is of the form $\Gamma _0\cup_p E'$, for some elliptic curve 
$E'$, and the isogeny induces the multiplication by $n$ on $J\Gamma _0$. 
\end{prop}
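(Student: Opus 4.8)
As in cases a) and b), the plan is to confront the limit isogeny $\chi_{t_0}$ with the canonical extensions of the two generalized Jacobians and compare extension classes, now using in an essential way that the normalization $\tilde\Gamma_0$ of the genus $g-1$ nodal curve $\Gamma_0$ is generic in $\mathcal M_{g-2}$ and that every curve $\Gamma_0\cup_p E$, for arbitrary smooth point $p$ and arbitrary elliptic $E$, lies in the fixed component $\overline{\cD_0}$ (this is the content of the dimension count $\dim\overline{\cD_0}=3g-5$ versus $\dim\mathcal M_{g-2}=3g-9$ carried out above). Since $JC_{t_0}=J\Gamma_0\times E$ sits in $0\to\mathbb C^{*}\to J\Gamma_0\times E\to J\tilde\Gamma_0\times E\to 0$ and $JC'_{t_0}$ sits in $0\to\mathbb C^{*r}\to JC'_{t_0}\to J\tilde C'_{t_0}\to 0$, and since a limit of isogenies is again an isogeny (the Remark opening this section), $\chi_{t_0}$ has finite kernel; comparing toric ranks this forces $r=1$ and $\gamma(z)=z^{m}$ for a nonzero integer $m$.

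The crucial structural step is to show that $\tilde C'_{t_0}$ is \emph{reducible}, i.e.\ that $J\tilde C'_{t_0}$ is a product of Jacobians. I would argue by contradiction: were $\tilde C'_{t_0}$ irreducible, then $\tilde\chi_{t_0}\colon J\tilde C'_{t_0}\to J\tilde\Gamma_0\times E$ would be an isogeny of $(g-1)$-dimensional abelian varieties, and comparing the $\mathbb C^{*}$-extension classes — which, as classes of generalized Jacobians, are differences of two points — gives, up to sign, $m(p'-q')=\tilde\chi_{t_0}^{*}(p-q)$ in $J\tilde C'_{t_0}$, with $p,q\in\tilde\Gamma_0$ the two branches of the node. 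Letting $p,q$ vary over the fixed curve $\tilde\Gamma_0$ (legitimate since all the $\Gamma_0\cup_p E$ remain in $\overline{\cD_0}$), the nodes of $C'_{t_0}$ sweep out a two-dimensional family, so $p'-q'$ traces the difference surface $\tilde C'_{t_0}-\tilde C'_{t_0}$, which the relation then forces into $\tilde\chi_{t_0}^{*}(J\tilde\Gamma_0\times\{0\})$ — a proper abelian subvariety of dimension $g-2<g-1$ — contradicting that $\tilde C'_{t_0}-\tilde C'_{t_0}$ generates $J\tilde C'_{t_0}$ when $\tilde C'_{t_0}$ is irreducible. Because $J\tilde\Gamma_0$ is simple for generic $\tilde\Gamma_0$, the splitting is necessarily $J\tilde C'_{t_0}\cong J\tilde C''_{t_0}\times E'$ with $\tilde C''_{t_0}$ irreducible of genus $g-2$ and $E'$ elliptic; hence $C'_{t_0}=C''_{t_0}\cup_{p'}E'$ with $C''_{t_0}$ nodal, its $\mathbb C^{*}$-extension class is $p''-q''$ with $p'',q''\in\tilde C''_{t_0}$, and $\tilde\chi_{t_0}$ splits as $\tilde\chi''_{t_0}\times\varphi$ for a nonzero isogeny $\varphi\colon E'\to E$.

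To finish, rewrite the diagram in this product form and read off the remaining relation $m(p''-q'')=(\tilde\chi''_{t_0})^{*}(p-q)$ in $J\tilde C''_{t_0}$, so that $\tilde\chi''_{t_0}$ carries the difference surface $\tilde\Gamma_0-\tilde\Gamma_0$ onto $m(\tilde C''_{t_0}-\tilde C''_{t_0})$; since $g-2\ge 3$ and $\tilde\Gamma_0$ is generic, the arguments of Section 3 of \cite{BarPir} apply and yield $C''_{t_0}\cong\Gamma_0$, $\tilde\chi''_{t_0}=n\cdot\mathrm{id}$, $n=m$, whence $\chi_{t_0}=n\times\varphi$ and in particular $\chi_{t_0}$ is multiplication by $n$ on $J\Gamma_0$, as claimed. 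The step I expect to be the main obstacle is the reducibility argument: one has to be certain that the deformation moving the branches of the node genuinely stays inside the component $\overline{\cD_0}$, and that the genericity inputs used — simplicity of $J\tilde\Gamma_0$ and the fact that $\tilde C'_{t_0}-\tilde C'_{t_0}$ generates $J\tilde C'_{t_0}$ and is not swallowed by any proper abelian subvariety — survive passage to a generic point of the boundary stratum; once reducibility is established, the final appeal to \cite{BarPir} is essentially formal.
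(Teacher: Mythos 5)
Your proposal follows the paper's own proof essentially step for step: the same dimension count showing $\tilde\Gamma_0$ may be taken generic and that all curves $\Gamma_0\cup_p E$ stay in $\overline{\cD_0}$, the same reducibility argument via the extension-class relation forcing $\tilde C'_{t_0}-\tilde C'_{t_0}$ into the proper abelian subvariety $\tilde\chi_{t_0}^{*}(J\tilde\Gamma_0)$, the same splitting $J\tilde C''_{t_0}\times E'$ from simplicity of $J\tilde\Gamma_0$, and the same concluding appeal to Section 3 of \cite{BarPir}. It is correct as the paper's argument is, and the obstacles you flag at the end are precisely the points the paper disposes of with the dominance of $\overline{\cD_0}\dasharrow\Delta_0(\mathcal M_{g-1})$.
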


\vskip 3mm
\section{End of the proof} 
Let us go back to our family of isogenies $\chi:\mathcal J' \lra \mathcal J$ parametrized by (some covering of) the divisor  $\mathcal D$.

Consider a generic point $t\in \cD$ corresponding to smooth curves $C_t'$ and $C_t$. 
Observe that for all $t$ the isogeny is determined by the map at the level of homology groups 
\[
 \chi_{t,\mathbb Z}:H_1(C'_t, \mathbb Z) \lra H_1 (C_t,\mathbb Z) 
\]
which we still denote by $\chi_t$. We set $\Lambda_t\subset H_1(C_t,\mathbb Z)$ for the image of $\chi_t$. This is a sublattice of maximal rank $2g$. We first note 
that the proof of Proposition (4.2.1) in \cite{BarPir} applies verbatim  to obtain the following result:

\begin{prop}
 Assume that $\Lambda_t =n H_1(C_t,\mathbb Z)$ for some positive integer $n$. Then $C_t'\cong C_t$ and $\chi _t$ is multiplication by $n$.
\end{prop}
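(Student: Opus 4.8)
The plan is to exploit the integrality hypothesis to recognize $\tfrac1n\chi_t$ as an isomorphism of principally polarized integral Hodge structures and then to invoke the Torelli theorem; this is exactly the content of Proposition (4.2.1) in \cite{BarPir}, and the argument carries over without change. First I would record two formal properties of the homology map $\chi_t\colon H_1(C'_t,\mathbb Z)\to H_1(C_t,\mathbb Z)$ attached to the isogeny $\tilde\chi_t\colon JC'_t\to JC_t$: it is injective with finite cokernel (it is the restriction to the period lattices of the $\mathbb C$-linear isomorphism of universal covers underlying $\tilde\chi_t$), and it is a morphism of Hodge structures, since an isogeny of abelian varieties is holomorphic and therefore preserves the Hodge decompositions of $H_1(C'_t,\mathbb C)$ and $H_1(C_t,\mathbb C)$. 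Hence the hypothesis $\Lambda_t=\chi_t\big(H_1(C'_t,\mathbb Z)\big)=n\,H_1(C_t,\mathbb Z)$ says precisely that $\iota:=\tfrac1n\chi_t$ carries $H_1(C'_t,\mathbb Z)$ isomorphically onto $H_1(C_t,\mathbb Z)$, compatibly with the Hodge filtrations.

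Next I would compare the polarizations. Let $E$ and $E'$ denote the (unimodular, alternating) intersection forms on $H_1(C_t,\mathbb Z)$ and $H_1(C'_t,\mathbb Z)$. As explained in Section 2 --- using that $\mathrm{End}(JC_t)\cong\mathbb Z$ for the generic curve of the divisor, see \cite{CGT}, \cite{baseN} --- the isogeny pulls the principal polarization of $JC_t$ back to the $m$-th power of that of $JC'_t$ for some positive integer $m$, so on homology $E(\chi_t a,\chi_t b)=m\,E'(a,b)$, that is $\iota^{*}E=\tfrac{m}{n^{2}}\,E'$. Since $\iota$ is a lattice isomorphism and $E$ is unimodular, $\iota^{*}E$ is unimodular too; comparing discriminants with the unimodular form $E'$ forces $(m/n^{2})^{2g}=1$, hence $m=n^{2}$ and $\iota^{*}E=E'$. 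Thus $\iota$ is an isomorphism of principally polarized integral Hodge structures.

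Then I would apply the Torelli theorem: either $\iota$ or $-\iota$ is induced by an isomorphism of curves $f\colon C'_t\xrightarrow{\sim}C_t$, and replacing $\tilde\chi_t$ by $[-1]\circ\tilde\chi_t$ if necessary --- which changes neither $\Lambda_t$ nor the statement to be proved --- we may assume $\iota=f_{*}$. In particular $C'_t\cong C_t$, and $\chi_t=n\,\iota=n\,f_{*}$, so under the identification $Jf\colon JC'_t\xrightarrow{\sim}JC_t$ the isogeny $\tilde\chi_t$ is multiplication by $n$, which is the assertion. The only steps carrying any content are the discriminant computation that pins down $m=n^{2}$ and the routine sign bookkeeping in the Torelli step; everything else is formal, so I expect no genuine obstacle --- which is precisely why \cite{BarPir}, Proposition (4.2.1), applies verbatim.
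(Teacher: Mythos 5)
Your argument is correct and is essentially the paper's own: the paper proves this proposition simply by asserting that Proposition (4.2.1) of \cite{BarPir} applies verbatim, and that proposition is proved exactly as you describe --- the hypothesis makes $\tfrac1n\chi_t$ a lattice isomorphism of Hodge structures, the unimodularity/discriminant comparison (using $\mathrm{End}(JC'_t)\cong\mathbb Z$ and the relation $\chi_t(x)\cup\chi_t(y)=m\,x\cup y$, which the paper also invokes) forces $m=n^2$ so that the polarizations match, and strong Torelli then yields $C'_t\cong C_t$ with $\chi_t=\pm n\,f_*$. The only cosmetic point is the sign: one really concludes that $\chi_t$ is multiplication by $\pm n$ under the identification $Jf$, which is all that is needed downstream.
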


Therefore to finish the proof of the Theorem we have to show the equality  $\Lambda_t =n H_1(C_t,\mathbb Z)$. To do this, the main idea is to get information on 
$\Lambda_t$ from the homology groups of some convenient limits $C_0$. In the previous sections we have shown the existence of certain limits and we have seen in Propositions (\ref{nodal_limit}), (\ref{limit_b}) and 
(\ref{limit_c})  how is the image of the limit of $ \chi_{t}$ when $t$ goes to one of these degenerations.

To pass information from some $C_0$ to the smooth point we use the following principle: 
 We can assume that there exists a disc $\mathbb  D\subset \overline {\mathcal D}\subset \overline{\mathcal M_{g}}$ centered at the class of the curve  $C_0$  such that the curves $C_t, C_t'$ corresponding to $\mathbb D \setminus \{0\}$ are smooth. After performing a base change (that we skip to simplify the notation) we can assume
that there is a family of 
 isogenies $\chi_{\mathbb D}:\mathcal J_{\mathbb D}'\lra \mathcal J_{\mathbb D}$ that coincides with the original isogeny $\chi_t$ for a generic $t$.
  We denote $\chi_0: JC_0' \lra JC_0$ the limit isogeny. 
We call $\mathcal C'_{\mathbb D}$ and  $\mathcal C_{\mathbb D}$ the corresponding families of curves. Since the central fibres $C_0'$ and $C_0$ are retracts of $\mathcal C'_{\mathbb D}$ and  $\mathcal C_{\mathbb D}$ respectively, we have a 
diagram   as follows:
\[
\xymatrix@C=1.pc@R=1.8pc{
H_1(C'_t,\mathbb Z)\ar[d]^{\chi_t}  \ar[r] & H_1(\mathcal C'_{\mathbb D},\mathbb Z) \ar@{=}[r] & H_1(C_0',\mathbb Z) \ar[d]^{\chi_0} \\
H_1(C_t,\mathbb Z)                  \ar[r] & H_1(\mathcal C_{\mathbb D},\mathbb Z)  \ar@{=}[r]& H_1(C_0,\mathbb Z).  
}
\]
Let us consider first the simplest case: assume that we are in the case a) and that $C_0$ is a generic element of $\overline {\mathcal D}\cap \Delta_0$, so  it is an irreducible curve with 
only one node. Remember that $C_0$ can be degenerated in this component to a curve $L$ consisting of a curve of genus $g-2$ with two attached curves, one elliptic (called $E_2$) and the other nodal and rational ($E_{\infty }$).  By Proposition (\ref{nodal_limit}) $C_0'\cong C_0$ and $\chi_0=n\cdot \text{Id}$.
The kernel of the horizontal maps are generated by the vanishing cycles $a_1'$ and $a_1$ respectively, therefore $\chi_t(a_1')=la_1$ for  some $l$.
 On the other hand we 
can lift a basis in $H_1(C_0,\mathbb Z)$  and construct simplectic bases $a'_1,b'_1,\dots,a'_g,b'_g$ in $H_1(C'_t,\mathbb Z)$ and $a_1,b_1,\dots ,a_g,b_g$ in $H_1(C_t,
\mathbb Z)$ in such a way that:
\[
 \chi _s(a_i')=n a_i+s_i a_1, \quad \chi _t( b_1') = n b_1+t_1a_1,\quad \chi _t(b_i') = n b_i+t_ia_1
\]
for some integers $s_i$ and $t_i$ and $i\ge 2$.   We also can assume that $a_g,b_g$ correspond to cycles which become a basis of the homology of $E_2\subset L$.

By the genericity of the curve $C_t$ in a divisor of the moduli space the pull-back of the theta divisor is a multiple of the theta divisor in $JC'_t$. This translates into the existence of a non-zero integer $m$ such that the cup-product satisfies 
\[
 \chi _t(x)\cup \chi _t(y)=m x\cup y.
\]
Then we obtain
\[
\begin{aligned}
 &m=\chi _t(a_1')\cup \chi_t(b_1')= la_1 \cup (nb_1+t_1a_1)=ln \\
&  m=\chi_t(a_2')\cup \chi _t(b_2')=n^2,
\end{aligned}
\]
so $n=l$. With similar computations it is easy to prove that $s_i=t_i=0$ for $i\ge 2$, hence:
\begin{equation}\label{lattice_case_aa}
 \chi_t(a_1')=n a_1,\qquad \chi_t(a_i')=na_i,\qquad \chi_t(b_i')=nb_i \qquad \text { for }\, i\ge 2.
\end{equation}
To get the piece of information which is still unknown we need to consider a second limit. We have seen in Corollary (\ref{cor:limits}) that fixing $\Gamma$ we can 
move freely the two elliptic curves $E_1$, $E_2$, that is $E_1 \cup \Gamma \cup E_2 \in \overline{\cD}$ for all $E_1, E_2$. We select a second limit curve $\hat L =E_1 
\cup \Gamma \cup E_{\infty}$ in such a way that the corresponding vanishing cycle is now $a_g$. Again, to simplify, we assume that $\hat L$ belongs to the case a) of 
section 5. Then, with the same argument, we get that $\chi_t$ satisfies (for the same simplectic basis):
\[
  \qquad \chi_t(a_i')=\hat na_i,\qquad \chi_t(b_i')=\hat nb_i \qquad \text { for }\, i\le  g-1
\]
and $\chi_t(a_g')=\hat n a_g$. Therefore $\hat n=n$ and $\Lambda_t =n H_1(C_t,\mathbb Z)$ for all $s\ne 0$. Hence we have finished (under the assumption that $L$ 
and $\hat L$ are limit curves of nodal curves). The rest of the proof consists in the description of the small modifications that have to be done to take care of the rest 
of the cases. Observe that the information on the limit given in the case b) (see Proposition (\ref{limit_b})) is the same as that given in the case a), that is we have again the relations (\ref{lattice_case_aa}). So we  only have 
to take care of the situation when at least one of the limit curves $L$, $\hat L$ belong to the case c).
Assume for example that $L$ does.
Using the first limit as above we know that
\[
 \chi_t(a_1')=n a_1,\qquad \chi_t(a_i')=na_i,\qquad \chi_t(b_i')=nb_i \qquad \text { for }\, 2\le i \le g-1.
\]
The difference with the previous cases is that we have no control on $\chi_t(b_1')$.
Remember that $L$ is the limit of a curve $\Gamma_0 \cup  E$ where $\Gamma_0$ is a nodal curve intersecting the elliptic curve $E$ in a smooth point. Denoting $\tilde 
\Gamma_0$ the normalization of $\Gamma_0$ we note that all the nodal curves with this normalization belong to the divisor $\overline {\mathcal D_0}$. So we change 
freely the node in such a way that the vanishing cycle of the node becomes $a_2$ (instead of $a_1$). By using the two vanishing cycles we get that $\chi_t(b_1')=nb_1'$ so we recover again the relations (\ref{lattice_case_aa}). This finishes the proof of the theorem. \qed

\vskip 3mm
Our theorem can be interpreted as a type of Noether-Lefschetz problem in the following way: consider in $\m{M}_g \times \m{M}_g$ the set
\[
 \mathcal {NL}_g=\{(C',C) \,|\, \text{ rank }NS(C  \times C')\ge 3\}.
\]
A consequence of what we have proved is the following result:

\begin{cor}\label{remark_NL}
For $g\ge 5$ all the components of $\mathcal {NL}_g$ outside the diagonal have dimension less than or equal to $3g-5$.
\end{cor}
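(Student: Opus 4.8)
The plan is to reinterpret the condition $\rk NS(C\times C')\ge 3$ as a statement about homomorphisms of Jacobians and then to feed it into Theorem \ref{main}. By the K\"unneth formula $H^2(C\times C',\Q)$ is the sum of $H^2(C)\otimes H^0(C')$, $H^1(C)\otimes H^1(C')$ and $H^0(C)\otimes H^2(C')$; the first and third pieces are spanned by the classes of the two rulings, while, after using the principal polarizations to identify $H^1$ with its dual, the $(1,1)$-part of the middle piece is canonically $\mathrm{Hom}_{HS}(H^1(C'),H^1(C))\cong\mathrm{Hom}(JC',JC)\otimes\Q$. Hence $\rk NS(C\times C')=2+\rk\mathrm{Hom}(JC',JC)$, so a point of $\mathcal{NL}_g$ is a pair of curves admitting a non-zero homomorphism of their Jacobians; a component $Z\not\subset\Delta$ of $\mathcal{NL}_g$ is then the closure of a family of such pairs whose general member has isogenous Jacobians, and it is the dimension of such a $Z$ that we must bound.

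The key geometric input is rigidity of the isogeny class. For a fixed curve $C$, every abelian variety isogenous to $JC$ is of the form $JC/H$ for a finite subgroup $H\subset JC$, and there are only countably many such subgroups; combined with the Torelli theorem this shows that $\{C'\in\cM_g : JC' \text{ is isogenous to } JC\}$ is countable. Consequently, writing $p_2\colon Z\to\cM_g$ for the second projection and $Z_2:=\overline{p_2(Z)}$, the fibre of $p_2$ over a general point of $Z_2$ is an algebraic set with at most countably many points, hence finite; therefore $\dim Z=\dim Z_2$, and symmetrically $\dim Z=\dim\overline{p_1(Z)}$.

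Now $Z_2$ is an irreducible subvariety of $\cM_g$ whose general point $C$ has $JC$ isogenous to the Jacobian of a curve not isomorphic to $C$, since the general point of $Z$ lies off $\Delta$. If $Z_2=\cM_g$ this contradicts the Bardelli--Pirola theorem (\cite{BarPir}), and if $Z_2$ has codimension $1$ in $\cM_g$ it contradicts Theorem \ref{main} with $k=1$, which applies because $g\ge 5$. Hence $Z_2$ has codimension at least $2$, and so $\dim Z=\dim Z_2\le 3g-5$, which is the assertion.

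The step that I expect to need the most care is the last clause of the first paragraph, the passage from $\mathrm{Hom}(JC',JC)\ne 0$ to $JC'\sim JC$ on the component $Z$: a non-zero homomorphism only produces an isogeny factor common to the two Jacobians, and loci of curves sharing a fixed small factor (for instance pairs of curves carrying a common elliptic quotient) can have dimension exceeding $3g-5$. One therefore has to use that the relevant component arises from a genuine family of isogenies --- precisely the situation produced by the paper's set-up via Voisin's observation and controlled by Theorem \ref{main} --- so that the general homomorphism has full image; granting this, the countability argument together with Theorem \ref{main} closes the proof.
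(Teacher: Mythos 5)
Your reduction is essentially the argument the paper has in mind: the paper offers no proof of this corollary beyond the phrase ``a consequence of what we have proved'', and the intended chain is exactly yours --- K\"unneth identifies $\rk NS(C\times C')-2$ with the rank of $\mathrm{Hom}(JC',JC)$, countability of an isogeny class together with Torelli makes the two projections of a component generically finite onto their images, and Theorem \ref{main} (for $k=1$, $g\ge 5$) together with the Bardelli--Pirola theorem (for $k=0$) forces the image in $\cM_g$, hence the component itself, to have dimension at most $3g-5$. That part of your write-up is correct and complete.

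The difficulty you isolate in your last paragraph, however, is a genuine gap and cannot simply be ``granted'': nothing forces the general point of a component of the set $\{\rk NS\ge 3\}$ to carry an isogeny rather than a homomorphism with positive-dimensional kernel, and the paper's machinery, which starts from a family of isogenies, says nothing about such components. Concretely, let $B_g\subset \cM_g$ be the bielliptic locus, of dimension $2g-2$, mapped to $\cM_1$ by sending a curve to its elliptic quotient. The fibre product $B_g\times_{\cM_1}B_g$ has dimension at least $2(2g-2)-1=4g-5$, its general point is a pair of non-isomorphic curves, and every such pair has $\mathrm{Hom}(JC',JC)\neq 0$ (compose pushforward to and pullback from the common elliptic quotient), hence lies in $\mathcal{NL}_g$ off the diagonal. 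Since $4g-5>3g-5$, any component containing this locus violates the stated bound. So the corollary is correct only if $\mathcal{NL}_g$ is read as the isogeny locus $\{(C',C)\,|\, JC' \text{ isogenous to } JC\}$ --- which the closing question about ten-dimensional components of $\mathcal{NL}_5$ suggests is what the authors intend --- and for that statement your proof is the right one and needs no further input. For the set literally defined by $\rk NS(C\times C')\ge 3$, neither your argument nor anything in the paper yields the bound, and the bound in fact fails.
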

The first natural problem one could face in this context is to investigate the existence of dimension $10$ components in $\mathcal {NL}_5$.  
Similar problems on isogenies can be considered for other families of abelian varieties (see for exemple \cite{NarPir}).

\end{document}